\documentclass[onecolumn,11pt]{article}

\usepackage{algorithmic}
\usepackage[ruled,vlined]{algorithm2e}
\usepackage{xcolor}
\usepackage{caption}
\usepackage{amsmath,amsfonts}
\usepackage{multirow}
\usepackage{graphicx}
\usepackage{breqn}
\usepackage{subcaption}
\usepackage{float}
\usepackage{enumerate}
\usepackage{booktabs}
\usepackage[a4paper,margin=1in]{geometry}
\usepackage[onehalfspacing]{setspace}
\usepackage[utf8]{inputenc}
\usepackage{enumitem}
\usepackage{verbatim}
\usepackage{tabularx}
\usepackage{makecell}
\usepackage{bbm}
\usepackage[hyphens]{url}
\usepackage{enumerate}
\usepackage{authblk}
\usepackage{adjustbox}
\usepackage{changepage}
\usepackage[short,nocomma]{optidef}
\usepackage[numbers]{natbib}

\usepackage[colorinlistoftodos]{todonotes}

\usepackage{soul}
\definecolor{dr}{rgb}{0.75,0.00,0.00}
\definecolor{lr}{rgb}{1.00,0.75,0.75}
\sethlcolor{lr}

\usepackage[colorlinks = true,
linkcolor = blue,
urlcolor  = blue,
citecolor = black,
anchorcolor = blue]{hyperref}

\usepackage{amsthm}
\newtheorem{myprop}{Proposition}
\newtheorem*{myprop*}{Proposition}

\title{Optimizing Autonomous Transfer Hub Networks:\protect\\Quantifying the Potential Impact of Self-Driving Trucks}
\author[1]{Chungjae~Lee}
\author[1]{Kevin~Dalmeijer}
\author[1]{Pascal~Van~Hentenryck}
\author[2,1]{Peibo~Zhang}
\affil[1]{H. Milton Stewart School of Industrial and Systems Engineering,\protect\\ Georgia Institute of Technology}
\affil[2]{Goizueta Business School, Emory University}
\date{\today}

\begin{document}

\maketitle

\pagenumbering{arabic}
\vfill
\begin{abstract}
	\noindent Autonomous trucks are expected to fundamentally transform the freight transportation industry.
	In particular, Autonomous Transfer Hub Networks (ATHNs), which combine autonomous trucks on middle miles with human-driven trucks on the first and last miles, are seen as the most likely deployment pathway for this technology. 
	This paper presents a framework to optimize ATHN operations and evaluate the benefits of autonomous trucking.
	By exploiting the problem structure, this paper introduces a flow-based optimization model for this purpose that can be solved by blackbox solvers in a matter of hours.
	The resulting framework is easy to apply and enables the data-driven analysis of large-scale systems.
	The power of this approach is demonstrated on a system that spans all of the United States over a four-week horizon.
	The case study quantifies the potential impact of autonomous trucking and shows that ATHNs can have significant benefits over traditional transportation networks.
	\\\\
	\emph{\textbf{Keywords}: 
		Autonomous Transfer Hub Networks, Autonomous Trucking, Load Planning, Mixed-Integer Linear Programming, Case Study.
	}
\end{abstract}
\clearpage

\section{Introduction}
Self-driving trucks are expected to fundamentally transform the freight transportation industry.
Morgan Stanley estimates the potential savings from self-driving trucks at \$168 billion annually for the United States alone \citep{Greene2013-AutonomousFreightVehicles}.
Additionally, autonomous transportation may improve on-road safety, and reduce emissions and traffic congestion \citep{ShortMurray2016-IdentifyingAutonomousVehicle,SlowikSharpe2018-AutomationLongHaul}.

\citeauthor{SAEInternational2018-TaxonomyDefinitionsTerms} defines different levels of driving automation, ranging from L0 to L5, corresponding to no-driving automation to full-driving automation \citep{SAEInternational2018-TaxonomyDefinitionsTerms}.
The current focus is on L4 technology (high automation), which aims at delivering automated trucks that can drive without any human intervention in specific domains, e.g., on highways.
The trucking industry is actively involved in making L4 vehicles a reality.
Daimler Trucks, one of the leading heavy-duty truck manufacturers in North America, acquired a majority stake in self-driving truck developer Torc Robotics, which laid out a roadmap to launch autonomous trucks in 2027 \citep{TransportTopics2023-TorcLaysOut}.
Autonomous trucking company TuSimple has recently completed the first driverless tests on Chinese public roads \citep{TechCrunch2023-TusimpleTestsRemoving}.
In the US, Aurora Innovation teamed up with FedEx to haul freight between Fort Worth and El Paso, Texas, and the company reports that 60,000 miles have been completed without incidents \citep{FedEx2022-FedexAuroraExpand}.
These are just some of the companies involved in autonomous trucking, and others include Embark, Gatik, Kodiak, and Plus \citep{FleetOwner2021-TusimpleAutonomousTruck,Forbes2021-PlusPartnersIveco,FreightWaves2021-GatikIsuzuPartner}.

A study by \citeauthor{Viscelli-Driverless?AutonomousTrucks} describes different scenarios for the adoption of autonomous trucks by the industry \citep{Viscelli-Driverless?AutonomousTrucks}.
The most likely scenario, according to some of the major players, is the \emph{transfer hub business model} ~\citep{Viscelli-Driverless?AutonomousTrucks,RolandBerger2018-ShiftingGearAutomation,ShahandashtEtAl2019-AutonomousVehiclesFreight}.
Joanna Buttler, head of Daimler's global autonomous technology group, for example, stated that ``We are staying laser focused on U.S. hub-to-hub, on-highway'' \citep{TransportTopics2023-TorcLaysOut}.
An Autonomous Transfer Hub Network (ATHN) makes use of autonomous truck ports, or \emph{transfer hubs}, to hand off trailers between human-driven trucks and driverless autonomous trucks.
Autonomous trucks then carry out the transportation between the hubs, while regular trucks serve the first and last miles (see Figure~\ref{fig:autonomous_example}). 
Orders are split into a first-mile leg, an autonomous leg, and a last-mile leg, each of which served by a different vehicle.
A human-driven truck picks up the freight at the customer location, and drops it off at a nearby transfer hub.
A driverless self-driving truck moves the trailer to a transfer hub close to the destination, and another human-driven truck performs the last leg.

\begin{figure}[!t]
	\centering
	\includegraphics[width=\linewidth]{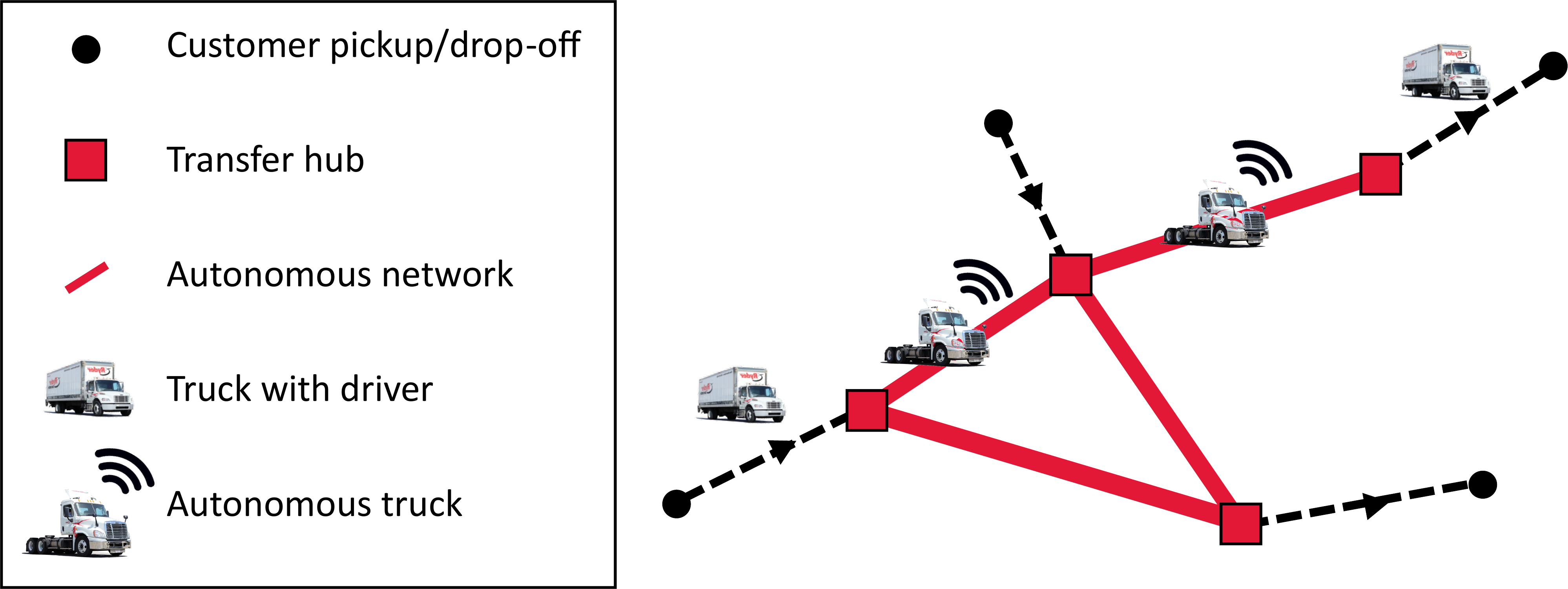}
	\caption{Example of an Autonomous Transfer Hub Network.}
	\label{fig:autonomous_example}
\end{figure}

The ATHN applies automation where it counts: Monotonous highway driving is automated, while more complex local driving and customer contact is left to humans.
Global consultancy firm \citeauthor{RolandBerger2018-ShiftingGearAutomation} estimates operational cost savings between 22\% and 40\% in the transfer hub model, based on cost estimates for three example trips \citep{RolandBerger2018-ShiftingGearAutomation}.
A recent white paper published by Ryder System, Inc. and the Socially Aware Mobility Lab studies whether these savings can be attained for actual operations and realistic orders in Full Truckload (FTL) shipping \citep{RyderSAM2021-ImpactAutonomousTrucking}.
It models ATHN operations as a scheduling problem and uses a Constraint Programming (CP) model to minimize empty miles and produce savings from 27\% to 40\% on a case study in the Southeast of the United States.

The current paper is an extension of the Ryder white paper that substantially improves, simplifies, and generalizes the methodology.
It is the culmination of two years of research into the core computational difficulty of optimizing ATHN operations, reported in the Ryder white paper and in technical reports by the authors \citep{DalmeijerVanHentenryck2021-OptimizingFreightOperations,LeeEtAl2022-OptimizationModelsAutonomous}.
The CP model presented in \citep{DalmeijerVanHentenryck2021-OptimizingFreightOperations} produces solutions that outperform the current operations, but that do not provide a bound on optimality.
\citep{LeeEtAl2022-OptimizationModelsAutonomous} introduces a Column Generation (CG) approach and a bespoke Network Flow (NF) model.
It is shown that the CP solution can be more than 10\% from optimal, and that the NF model can quickly produce solutions within 1\% from optimality.
These earlier findings motivate the flow-based optimization model in this paper that exploits the problem structure and is solved to optimality by blackbox solvers in a matter of hours.
The resulting framework is easy to apply and enables the data-driven analysis of large-scale systems.
It has also enabled a follow-up study on the role of hub capacities in ATHNs \citep{LeeEtAl2023-ConstraintProgrammingImprove}.

The power of the new methodology is demonstrated on an FTL system that spans all of the United States over a four-week horizon, expanding both the region and time horizon used in earlier reports.
The case study quantifies the potential impact of self-driving trucks and shows that ATHNs yield significant benefit over traditional transportation networks.
The main contributions of this work can be summarized as follows:
\begin{enumerate}
	\item The paper provides a high-level framework to optimize ATHN operations.
	\item The paper demonstrates that this enables the study of large-scale systems, requiring only a blackbox solver.
	\item The paper uses realistic order data to quantify the potential impact of FTL autonomous trucking in the US on a national scale.
\end{enumerate}

\noindent The remainder of this paper is organized as follows.
Section~\ref{sec:litreview} presents an overview of the literature.
Section~\ref{sec:problem} provides the problem description and Section~\ref{sec:methodology} discusses the methodology for optimizing ATHNs.
This methodology is applied to a case study in the US that is introduced in Section~\ref{sec:casestudy}.
The baseline results and the analysis of the potential impact of autonomous trucking are presented in Section~\ref{sec:baseline} and a detailed sensitivity analysis is provided by Section~\ref{sec:results}.
Finally, Section~\ref{sec:conclusion} provides the conclusions.

\section{Literature Review}
\label{sec:litreview}

As autonomous technology advances, more papers are studying the effect of autonomous vehicles on transportation systems.
\citep{Flaemig2016-AutonomousDrivingTechnical} provides an overview of the different ways that autonomous vehicles can be used both on public infrastructure and on private property (e.g., warehouses or company grounds).
In the urban transportation setting, \citep{CorreiaArem2016-SolvingUserOptimum} studies the effect of autonomous vehicles on traffic delays and parking demand in a city.
The authors use convex optimization to determine traffic assignments and a mixed integer nonlinear formulation to assign autonomous vehicles to households.
A case study for the city of Delft, The Netherlands, demonstrates a positive impact on the road network.

In the freight transportation context, routing and scheduling problems with autonomous trucks have gained attention very recently.
\citep{ChenEtAl2021-AutonomousTruckScheduling} considers scheduling a platoon of autonomous trucks to reduce air resistance when traveling between two seaport terminals in Singapore.
The authors present a mixed integer second-order-cone formulation that is solved with a column-generation based heuristic.
In the area of service network design, \citep{ScherrEtAl2018-ServiceNetworkDesign} proposes a problem where a human-driven truck leads a platoon of autonomous vehicles in the first tier of city logistics.
An arc-based mixed integer programming model on a time-space network is presented, but empirical observations show that only small problem instances are tractable.
\citep{ScherrEtAl2020-DynamicDiscretizationDiscovery} extends this work by introducing a dynamic discretization discovery approach that outperforms a commercial solver, and also present a heuristic to quickly generate solutions.

In the Less-Than-Truckload (LTL) context, \citep{AlHajjHassanEtAl2022-DailyLoadPlanning} studies the daily load planning problem under different levels of automation.
The paper focuses on modifying a given base plan to deal with dynamic load requests and other aspects that are important during operations, including driver regulations where drivers are involved.
The authors present a column-generation based heuristic to solve industry-based instances with up to 20 hubs and 1500 loads over a one-week horizon.

In terms of the problem structure, optimizing full truckload ATHN operations can be seen as a Pickup and Delivery Problem with Time Windows (PDPTW), where trucks pick up and drop off loads within the time windows prescribed by the customers.
The book \citet{TothVigo2014-VehicleRoutingProblems} provides a survey of this vehicle routing problem and other variants.
However, instead of routing, this paper will exploit the problem structure and take the perspective of scheduling a sequence of tasks (combined pickups and deliveries), which is closely related to the Vehicle Scheduling Problem with Time Windows (VSPTW, \citep{DesrosiersEtAl1995-TimeConstrainedRouting}).
These problems are well studied, and several exact and heuristic solution methods exist.
For example, \citep{FrelingEtAl2001-ModelsAlgorithmsSingle} presents a solution method based on the primal-dual algorithm framework for VSPTW with a single depot.
\citep{RibeiroSoumis1994-ColumnGenerationApproach} proposes a column-generation approach for the VPSTW with multiple depots, and \citep{HadjarEtAl2006-BranchCutAlgorithm} presents a branch-and-cut algorithm for the same problem.
\citep{SteinzenEtAl2010-TimeSpaceNetwork} considers solving the time-extended variant of the VSPTW with multiple depots using a heuristic based on the branch-and-price framework.
\citep{CampbellSavelsbergh} presents insertion heuristics for vehicle routing and scheduling problems.

The Vehicle Routing Problem with Full Truckloads (VRPFL, \citep{ArunapuramEtAl2003-VehicleRoutingScheduling}) is the specific variant that perhaps most structurally resembles the ATHN problem.
Similar to the current paper, the VRPFL asks for minimum-cost truck routes to serve a set of loads that are specified by an origin, destination, and a pickup time window.
\citep{ArunapuramEtAl2003-VehicleRoutingScheduling} proposes a branch-and-price framework as the solution approach.
The authors assume that each order consumes the full capacity of the truck, and the same assumption is made for optimizing ATHN operations, which reflects that autonomous trucks are expected to be mostly used for long-haul trips.
A crucial technical difference between \citep{ArunapuramEtAl2003-VehicleRoutingScheduling} and the current paper is that autonomous trucks are assumed to be completely interchangeable.
This will allow for a flow-based optimization model that is amenable to blackbox solving.

This paper introduces a high-level framework to optimize ATHN operations.
The goal of this framework is to provide a practical way to study large-scale autonomous FTL systems and to quantify the potential impact of autonomous trucking.
Previous works often rely on advanced optimization techniques such as cutting planes or column generation, or provide methods that do not scale to industry-sized problems.
For example, the largest problem considered by \citep{ArunapuramEtAl2003-VehicleRoutingScheduling} involves only 5 hubs and 160 loads.
In contrast, this paper exploits the problem structure to provide a model that is blackbox solvable on a large scale (up to 200 hubs and 6000+ loads over a four-week horizon).
Another benefit of the high-level framework is that it can be used to generate a base plan that forms the basis for the operational decisions, e.g., as studied by \citep{AlHajjHassanEtAl2022-DailyLoadPlanning} for LTL trucking.

\section{Problem Description}
\label{sec:problem}

This section introduces the problem of optimizing ATHN operations, while the solution methodology is presented in Section~\ref{sec:methodology}.
Table~\ref{tab:problem_symbols} summarizes the nomenclature for the problem description.
The goal is to serve a set of $n$ full truckloads $L$ at minimum cost with a combination of deliveries through the autonomous network and direct deliveries with regular trucks.
Each load $l \in L$ is identified by an origin location $o(l)$, a destination location $d(l)$, and a planned departure time, or release time, $r(l)$.
The autonomous network is based on a set of transfer hubs $V_H$.
Every load $l \in L$ is associated with an origin hub $h^+_l\in V_H$ near the origin $o(l)$ and a destination hub $h^-_l \in V_H$ near the destination $d(l)$.

\begin{figure*}[!tp]
	\scriptsize
	\centering
	\begin{tabular}{p{0.05\textwidth}p{0.90\textwidth}}
		\toprule
		Symbol & Definition \\
		\midrule
		\multicolumn{2}{l}{\textbf{Sets and Graphs}} \\
		$L$ & set of loads, each load $l \in L$ consists of an origin $o(l)$, a destination $d(l)$ and a release time $r(l)$.\\
		$V_H$ & set of autonomous transfer hub locations, $V_H \subseteq V$.\\
		$G$ & $=(V,A)$, location graph that models locations and connections in the ATHN.\\
		$V$ & set of locations.\\
		$A$ & set of location arcs, each arc $(i,j) \in A$ corresponds to travel from location $i\in V$ to location $j \in V$.\\
		\multicolumn{2}{l}{\textbf{Parameters}} \\
		$n$ & number of loads, $n=\lvert L \rvert$.\\
		$h_l^+$ & origin hub for load $l \in L$, $h_l^+ \in V_H$.\\
		$h_l^-$ & destination hub for load $l \in L$, $h_l^- \in V_H$.\\
		$K$ & maximum number of autonomous trucks.\\
		$\Delta$ & flexibility around the planned departure time (depart up to $\Delta$ earlier or later than planned).\\
		$S$ & autonomous truck loading/unloading time.\\
		$c_a$ & distance to travel location arc $a \in A$, $c_a \ge 0$.\\
		$\tau_a$ & time to travel location arc $a \in A$, $\tau_a > 0$.\\
		$\alpha$ & discount factor for autonomous mileage, $\alpha \in [0,1]$\\
		$\beta$ & first/last-mile inefficiency, $\beta \in [0,1)$\\
		\bottomrule
	\end{tabular}
	\captionof{table}{Nomenclature Problem Description.}%
	\label{tab:problem_symbols}%
\end{figure*}

\paragraph{Solution}
A solution consists of three types of decisions that are made jointly.
First, it is determined how each load $l \in L$ is served.
It is assumed that there are exactly two options:
\begin{itemize}
	\item \textbf{Autonomous:} The load follows the path $o(l) \rightarrow h^+_l \rightarrow h^-_l \rightarrow d(l)$. The first and last legs are performed by a regular truck, while the connection between the hubs is served by an autonomous truck.
	\item \textbf{Direct:} The load follows the path $o(l) \rightarrow d(l) \rightarrow o(l)$. Both legs are served by a single regular truck that returns empty.
	Note that the case study will consider challenging orders that actually incur such an empty return in practice.
\end{itemize}
Second, the autonomous legs ($h^+_l \rightarrow h^-_l$) of the loads that are served autonomously are combined into \emph{routes} for at most $K \ge 0$ autonomous trucks.
Note that these routes may include empty relocations from $h^+_l$ to $h^-_{l'}$ between loads $l$ and $l'$.
It is assumed that sufficient regular trucks are available to perform the traditional legs.
The corresponding costs will be captured in the objective function, but the regular truck routes are not modeled explicitly.
This is motivated by the fact that, in practice, the first- and last-mile problems are not very constrained.
Third, it is decided at which time each load is picked up.
It is assumed that every load $l \in L$ admits a flexibility of $\Delta \ge 0$ around the planned departure time $r(l)$, leading to a time window of $[r(l)-\Delta, r(l)+\Delta]$ for pickup.
This time window is translated to $h^+_l$, $h^-_l$, and $d(l)$ according to the travel times to maintain this flexibility throughout.
The travel times include time for loading and unloading the autonomous truck, which is assumed to be $S \ge 0$.
A solution is feasible if each load is served autonomously or directly, all implied autonomous legs are covered by autonomous truck routes, and the autonomous truck routes are feasible with respect to time.
Note that it is always feasible to replicate the current situation by serving all loads directly and not using any autonomous trucks.

\paragraph{Location Graph}
Before defining the objective, it is convenient to define a \emph{location graph}.
The location graph models all relevant locations and potential connections in the ATHN.
Let the location graph be denoted by the directed graph $G = (V, A)$.
Vertex set $V$ contains a vertex for every hub location, and two vertices for every load $l\in L$ that correspond to the origin $o(l)$ and the destination $d(l)$, respectively.
Arcs $a\in A$ are defined from every origin to the hubs (traditional first mile), between all the hubs (autonomous middle mile), from the hubs to every destination (traditional last mile), and between origin and destination directly (traditional direct delivery and empty return).
Note that the arcs between the hubs form a complete graph.
Every arc $a \in A$ is associated with a distance $c_a \ge 0$ and a travel time $\tau_a > 0$ obtained from \citet{OpenStreetMap2021}.
For convenience, the cost and travel time from $i \in V$ to itself are defined as 0.

\paragraph{Objective}
The objective is to serve all loads at minimum cost.
While any non-negative arc-additive cost structure is supported, this paper will define cost as the total distance in traditional mileage equivalent.
Autonomous trucks incur a cost of $(1-\alpha)c_a$ for every arc $a \in A$ on their routes, including arcs that represent empty relocations.
The parameter $\alpha \in [0, 1]$ discounts the autonomous distance to correct for reduced labor cost.
A direct trip for load $l \in L$ has a cost equal to its distance of $c_{o(l)d(l)} + c_{d(l)o(l)}$.
Note that the discount does not apply to regular trucks.
Finally, each first/last-mile arc $a \in A$ is assigned a cost of $\frac{1}{1-\beta} c_a$.
The parameter $\beta \in [0,1)$ represents the first/last-mile inefficiency, which assumes that a fraction $\beta$ of the first/last-mile route mileage would be empty.
The factor $\frac{1}{1-\beta}$ increases the cost of the first/last-mile arcs to compensate for the fact that these routes are not modeled explicitly.
The total objective is the sum of the above components and can be interpreted as the total distance measured in equivalent traditional mileage.

\section{Methodology}
\label{sec:methodology}

This section introduces the methodology that enables a large-scale data-driven study to quantify the impact of self-driving trucks.
The nomenclature for this section is summarized by Table~\ref{tab:symbols}.
Practical assumptions and preprocessing steps lead to a model that is easy to implement, can immediately be solved by blackbox solvers, and is highly extensible.
The section ends by providing a practical guide to enable regional and temporal analysis in this framework, which requires only minor modifications to the input and the model.

\begin{figure*}[!tp]
	\scriptsize
	\centering
	\begin{tabular}{p{0.05\textwidth}p{0.90\textwidth}}
		\toprule
		Symbol & Definition \\
		\midrule
		\multicolumn{2}{l}{\textbf{Sets and Graphs}} \\
		$T$ & set of tasks, each task $t \in T$ corresponds one-to-one to a load $l(t) \in L$, and represents serving this load on the ATHN with pickup time $p(t)$ at its origin hub.\\
		$\bar{G}$ & $=(\bar{V}, \bar{A})$, task graph that models the sequence of tasks.\\
		$\bar{V}$ & set of vertices $\{0, \hdots, n+1\}$ with source $0$, sink $n+1$, and tasks $1, \hdots, n$.\\
		$\bar{A}$ & set of task arcs, each arc $(t, t') \in \bar{A}$ indicates that vertex $t \in \bar{V}$ is followed immediately by vertex $t' \in \bar{V}$.\\
		\multicolumn{2}{l}{\textbf{Parameters}} \\
		$\bar{\tau}_a$ & duration of task arc $(t,t') \in \bar{A}$, $\tau_a > 0$, which consists of loading a truck for task $t$, driving between hubs, unloading, and relocating to the origin hub of task $t'$.\\
		$\bar{C}_t$ & baseline cost for serving load $l(t)$ directly with a regular truck.\\
		$\bar{c}_a$ & cost of task arc $a \in \bar{A}$, which is the difference between serving load $l(t)$ compared to the baseline.\\
		$M_{tt'}$ & $=p(t) - p(t') + 2\Delta + \bar{\tau}_{tt'}$, for $t,t' \in T$, sufficiently large big-M for Constraints~\eqref{eq:athn:timemtz}.\\
		\multicolumn{2}{l}{\textbf{Variables}} \\
		$x_t$ & continuous variable that indicates the start time of task $t \in T$.\\
		$y_a$ & binary variable that takes value one if $a \in \bar{A}$ is selected (i.e., the corresponding tasks are performed sequentially by the same vehicle), and zero otherwise.\\
		\bottomrule
	\end{tabular}
	\captionof{table}{Nomenclature Methodology.}%
	\label{tab:symbols}%
\end{figure*}

\paragraph{Task Graph}
The optimization model considers the problem of optimizing ATHN operations from the perspective of scheduling \emph{tasks} for autonomous trucks.
Similar transformations are common in the arc-routing literature (e.g., see \citep{BlackEtAl2013-TimeDependentPrize}).
One task $t \in T$ is created for every load $l \in L$.
If an autonomous truck performs a task, it means that the corresponding load is served through the autonomous network, and this truck serves the middle mile.
If a task is not performed by any autonomous truck, this means that the corresponding load is served directly by a regular truck.
Note that while performing tasks is optional, all loads are served in the end: performing a task only indicates that the task is served autonomously.
Appropriate benefits will be assigned to performing tasks to match the cost structure in Section~\ref{sec:problem}.

\begin{figure}[!t]
	\centering
	\includegraphics[width=0.8\textwidth]{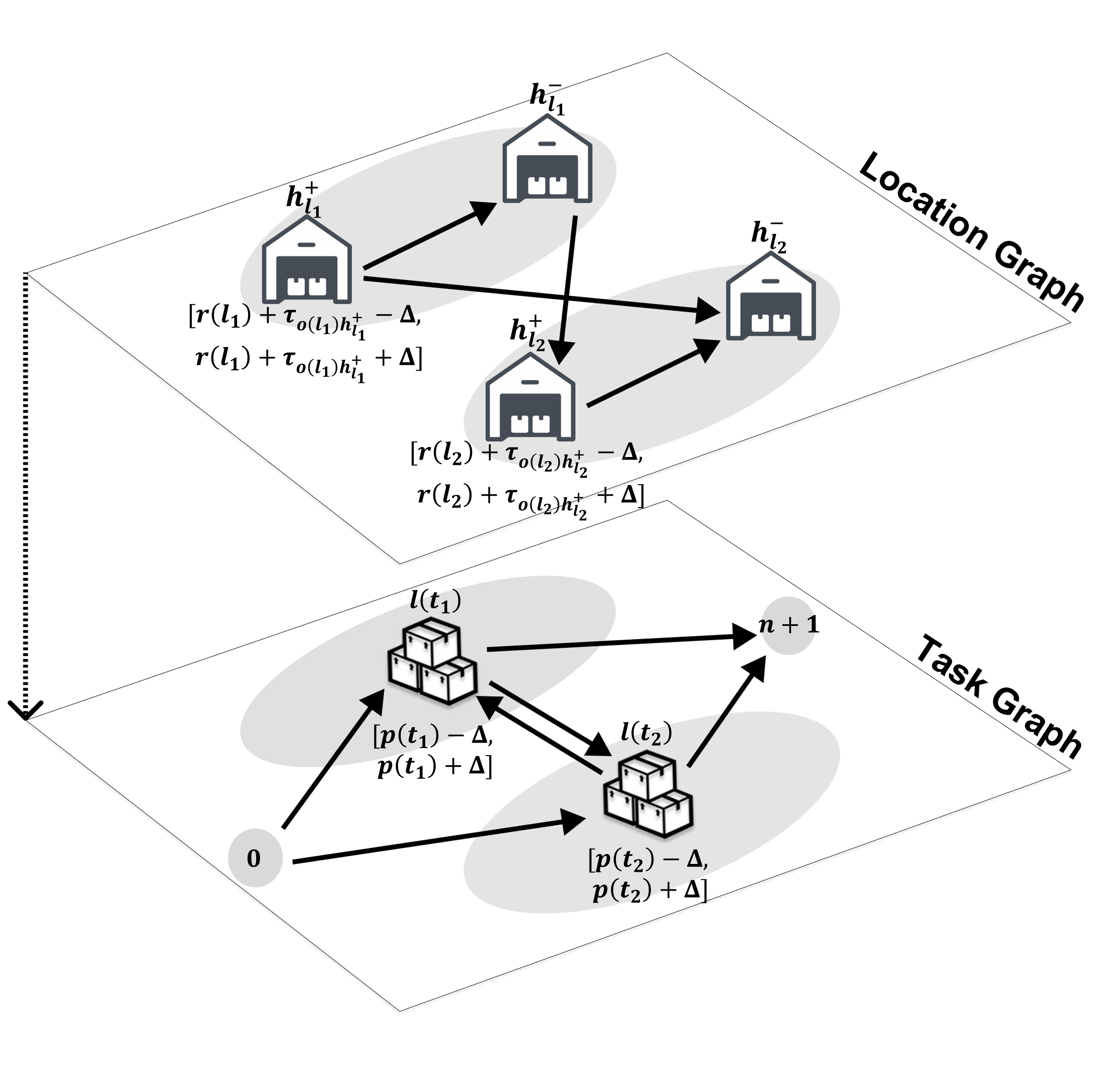}
	\caption{Constructing the Task Graph from the Location Graph.}
	\label{fig:taskgraph}
\end{figure}

To capture this perspective, the location graph is transformed into a directed \emph{task graph} $\bar{G} = (\bar{V}, \bar{A})$ in which the nodes are tasks and the arcs indicate the sequence of tasks performed by the same autonomous truck.
The set $\bar{V}$ includes a source node $0$ where each sequence starts, and a sink node $n+1$ where it ends.
As the nodes now represent tasks instead of locations, the number of nodes in the task graph is typically larger than the number of nodes in the location graph.
Arcs are defined from the source to the tasks, between the tasks (bi-directional), and from the tasks to the sink.
Figure~\ref{fig:taskgraph} provides an illustrative example.
The location graph shows four hubs and two loads $l_1$ and $l_2$ associated with tasks $t_1$ and $t_2$, respectively.
Visiting node $t_1$ means that an autonomous truck loads at origin hub $h^+_{l_1}$, drives to destination hub $h^-_{l_1}$, and unloads there.
It also implies that the first and last miles are performed by regular trucks (not pictured).
After that, the autonomous truck may either perform another task $t_2 \in T$, which first requires a relocation from $h^-_{l_1}$ to $h^+_{l_2}$, or it may end its sequence.
Loads for which the corresponding task is not covered are served by a direct trip with a regular truck (not pictured).
This means that all operations in the ATHN are captured in the task graph by a set of paths from the source to the sink, where each path corresponds to an autonomous truck.

\paragraph{Routes and Costs}
Optimizing ATHN operations now amounts to choosing a set of feasible autonomous truck routes that minimize the total cost.
A route is defined as a simple path in the task graph from source to sink, together with a starting time for every task.
Arcs between tasks $t_1, t_2 \in T$ model the passage of time between picking up loads $l_1$ and $l_2$, respectively.
That is, the duration is defined as $\bar{\tau}_{t_1t_2} = S + \tau_{h^+_{l_1} h^-_{l_1}} + S + \tau_{h^-_{l_1} h^+_{l_2}} > 0$, which sums the time for loading, performing the middle mile of load $l_1$, unloading, and relocating to the starting point of load $l_2$.
Task $t_1$ must start in the correct time window, which is obtained by shifting the original time window of load $l_1$ by the time it takes to perform the first mile.
This time window is given by $[p(t_1) - \Delta, p(t_1) + \Delta]$, where $p(t_1) = r(l_1) + t_{o(l_1) h^+_{l_1}}$.

Not covering task $t_1 \in T$ is associated with a constant baseline cost of $\bar{C}_{t_1}$ for performing a direct trip.
This value is given by $\bar{C}_{t_1} = c_{o(l_1) d(l_1)} + c_{d(l_1) o(l_1)}$.
If task $t_1$ \emph{is} performed, the cost on the outgoing arc replaces the baseline cost with the appropriate costs for serving the load autonomously.
More precisely, if task $t_1$ appears in a sequence followed by task $t_2 \in T$, the cost $\bar{c}_{t_1t_2}$ of arc $(t_1, t_2) \in A$ is defined as follows:
\begin{equation}
	\bar{c}_{t_1t_2} = \underbrace{\frac{1}{1-\beta} c_{o(l_1) h^+_{l_1}}}_{\textrm{first mile}} + \underbrace{(1-\alpha) c_{h^+_{l_1} h^-_{l_1}}}_{\textrm{middle mile}} + \underbrace{\frac{1}{1-\beta} c_{h^-_{l_1} d(l_1)}}_{\textrm{last mile}} + \underbrace{(1-\alpha) c_{h^-_{l_1} h^+_{l_2}}}_{\textrm{relocation}} - \underbrace{\bar{C}_{t_1}}_{\textrm{direct}}.
\end{equation}
Along the same lines, source arcs $a\in \bar{A}$ have cost $\bar{c}_a = 0$ and sink arcs omit the relocation term.
Note that $\bar{c}_a < 0$ when an autonomous delivery is preferred over a direct delivery, which encourages the task to be performed.

\paragraph{Optimization Model}

The optimization problem can now be stated as follows:
\begin{mini!}
	%
	{}
	%
	{\sum_{t \in T} \bar{C_t} + \sum_{a \in \bar{A}} \bar{c}_a y_a, \label{eq:athn:obj}}
	%
	{\label{formulation:athn}}
	%
	{}
	%
	%
	\addConstraint
	{\sum_{a \in \delta^+_t} y_a}
	{\le 1 \quad \label{eq:athn:visit}}
	{\forall t \in T,}
	\addConstraint
	{\sum_{a \in \delta^+_t} y_a}
	{= \sum_{a \in \delta^-_t} y_a \label{eq:athn:flowbalance}}
	{\forall t \in T,}
	\addConstraint
	{\sum_{a \in \delta^+_0} y_a}
	{\le K, \label{eq:athn:vehicles}}
	{}
	\addConstraint
	{x_{t'}}
	{\ge x_t + \bar{\tau}_{tt'} - M_{tt'} (1-y_a),\quad \label{eq:athn:timemtz}}
	{\forall t, t' \in T, (t,t') \in \bar{A}}
	\addConstraint
	{x_t}
	{\in [p(t)-\Delta, p(t)+\Delta] \label{eq:athn:x}}
	{\forall t \in T,}
	\addConstraint
	{y_a}
	{\in \mathbb{B} \label{eq:athn:y}}
	{\forall a \in \bar{A}.}
\end{mini!}%

\noindent Let $x_t \in [p(t)-\Delta, p(t)+\Delta]$ be the start time of task $t \in T$.
The variable $y_a \in \mathbb{B}$ is the flow on arc $a \in \bar{A}$, i.e., it takes value one if the tasks are performed sequentially by the same vehicle and zero otherwise.
For convenience, let $\delta^+_v$ and $\delta^-_v$ denote the out-arcs and in-arcs of $v \in \bar{V}$, respectively.
Problem~\eqref{formulation:athn} then models the optimization of ATHN operations.
Objective~\eqref{eq:athn:obj} minimizes the system cost as discussed above.
Constraints~\eqref{eq:athn:visit} require that each task is performed at most once, and Constraints~\eqref{eq:athn:flowbalance} ensure flow conservation.
The number of vehicles is limited by Constraint~\eqref{eq:athn:vehicles}.
Constraints~\eqref{eq:athn:timemtz} are \citet*{MillerEtAl1960-IntegerProgrammingFormulation} constraints that model the passage of time and eliminate cycles.
It is straightforward to show that the constants
\begin{equation}
	\label{eq:bigM}
	M_{tt'} = \bar{\tau}_{tt'} - \underbrace{\left(p(t') - \Delta\right)}_{\textrm{lowerbound on $x_{t'}$}} + \underbrace{\left(p(t) + \Delta\right)}_{\textrm{upperbound on $x_t$}}
\end{equation}
are sufficiently large to make the constraint inactive when $y_a=0$.
Finally, Equations~\eqref{eq:athn:x}-\eqref{eq:athn:y} define the variables.
For a consistent analysis, the solution is postprocessed to shift the start times to as early in time as possible.

\subsection{Acceleration Techniques}
The size of Problem~\eqref{formulation:athn} can be reduced significantly by recognizing that many arcs $(t,t') \in \bar{A}$ are either trivially time-feasible because task $t'$ is planned much later than task $t$, or trivially time-infeasible because task $t'$ is planned much earlier than task $t$.
These observations are formalized in the following proposition.

\begin{myprop}[Preprocessing Rules]
	\label{prop:preprocessing}
	Let $a_t = p(t)-\Delta$ and $b_t = p(t)+\Delta$ be the earliest and latest possible start time of task $t \in T$, respectively.
	The following preprocessing rules are valid for arc $(t,t') \in \bar{A}$ between two tasks $t,t' \in T$.
	\begin{enumerate}
		\item \textbf{Arc is always time feasible:} $b_t + \bar{\tau}_{tt'} \le a_{t'}$ $\Rightarrow$ remove Constraint~\eqref{eq:athn:timemtz} for arc $(t,t')$.
		\item \textbf{Arc is never time feasible:} $a_t + \bar{\tau}_{tt'} > b_{t'}$ $\Rightarrow$ remove arc $(t,t')$ from $\bar{A}$.
	\end{enumerate}
\end{myprop}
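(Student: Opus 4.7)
The plan is to prove both preprocessing rules by a direct case analysis on the binary variable $y_a$ associated with arc $a = (t,t')$, combined with the time-window bounds $x_t \in [a_t, b_t]$ and $x_{t'} \in [a_{t'}, b_{t'}]$ from Constraint~\eqref{eq:athn:x}. The essential observation is that Constraint~\eqref{eq:athn:timemtz} only couples the start times of $t$ and $t'$ when $y_a = 1$; when $y_a = 0$ the choice of $M_{tt'}$ in \eqref{eq:bigM} already renders the constraint inactive. Both rules therefore reduce to checking what the implied inequality $x_{t'} \ge x_t + \bar{\tau}_{tt'}$ demands of the two time windows.

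For the first rule I would argue that Constraint~\eqref{eq:athn:timemtz} is automatically satisfied regardless of $y_a$. The $y_a = 0$ case is handled by the earlier validity argument for $M_{tt'}$. For $y_a = 1$ the constraint reads $x_{t'} \ge x_t + \bar{\tau}_{tt'}$, and chaining $x_t \le b_t$, the hypothesis $b_t + \bar{\tau}_{tt'} \le a_{t'}$, and $x_{t'} \ge a_{t'}$ gives $x_{t'} \ge a_{t'} \ge b_t + \bar{\tau}_{tt'} \ge x_t + \bar{\tau}_{tt'}$, so the inequality is automatic. Hence Constraint~\eqref{eq:athn:timemtz} for this arc is redundant and can safely be dropped.

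For the second rule I would show that $y_a = 1$ is infeasible, so the arc is never traversed. Suppose for contradiction that some feasible solution has $y_a = 1$. Then Constraint~\eqref{eq:athn:timemtz} reduces to $x_{t'} \ge x_t + \bar{\tau}_{tt'}$, and combining $x_t \ge a_t$ with the hypothesis $a_t + \bar{\tau}_{tt'} > b_{t'}$ yields $x_{t'} \ge a_t + \bar{\tau}_{tt'} > b_{t'}$, contradicting $x_{t'} \le b_{t'}$ from \eqref{eq:athn:x}. Therefore $y_a = 0$ in every feasible solution, and removing $(t,t')$ from $\bar{A}$ leaves the feasible region, and a fortiori the optimal value, unchanged.

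The argument is essentially mechanical once the time-window bounds are combined with the MTZ inequality, so the main thing to be careful about is Rule~1: one must invoke the pre-established validity of $M_{tt'}$ from \eqref{eq:bigM} to dispose of the $y_a = 0$ branch rather than re-deriving it, since the rule only removes the constraint and does not touch the big-M itself. This keeps the two preprocessing steps logically independent and avoids any circularity.
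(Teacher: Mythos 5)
Your proof is correct and follows essentially the same chain of inequalities as the paper: for Rule~1, $x_{t'} \ge a_{t'} \ge b_t + \bar{\tau}_{tt'} \ge x_t + \bar{\tau}_{tt'}$ shows the constraint is redundant, and for Rule~2, $x_t + \bar{\tau}_{tt'} \ge a_t + \bar{\tau}_{tt'} > b_{t'} \ge x_{t'}$ shows $y_a=1$ is infeasible. Your explicit handling of the $y_a=0$ branch in Rule~1 via the already-established validity of $M_{tt'}$ is a small extra bit of care the paper leaves implicit, but the argument is the same.
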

\begin{proof}
	By definition, $x_t$ is only allowed to take values in $x_t \in [a_t, b_t]$.
	The condition in the first rule implies $x_t + \bar{\tau}_{tt'} \le b_t + \bar{\tau}_{tt'} \le a_{t'} \le x_{t'}$ $\Leftrightarrow$ $x_{t'} \ge x_t + \bar{\tau}_{tt'}$ for all feasible values of $x_t$ and $x_{t'}$.
	It follows that the time constraint is redundant and can be removed.
	Similarly, the condition in the second rule implies $x_t + \bar{\tau}_{tt'} \ge a_t + \bar{\tau}_{tt'} > b_{t'} \ge x_t'$ $\Leftrightarrow$ $x_t' < x_t + \bar{\tau}_{tt'}$.
	It follows that $y_{tt'} = 1$ would violate Constraint~\eqref{eq:athn:timemtz}.
	As such, $y_{tt'}$ may be set to zero, which is achieved by simply removing the arc.
	Hence, these preprocessing rules are valid.
\end{proof}

Both preprocessing rules eliminate time constraints, which can make them incredibly powerful.
If all time Constraints~\eqref{eq:athn:timemtz} are eliminated, then the $x$-variables~\eqref{eq:athn:x} are automatically satisfied, and the remainder of Problem~\eqref{formulation:athn} can be seen as a minimum-cost network flow problem.
The only constraints that are not in standard form are Constraints~\eqref{eq:athn:visit} and \eqref{eq:athn:vehicles}, but they take the form of node capacities that can be handled through node splitting \citep{AhujaEtAl1993-NetworkFlowsTheory}.
It is well-known that the min-cost flow problem exhibits the integrality property, and can be solved in polynomial time by linear programming.
As the flexibility $\Delta$ decreases, the preprocessing rules become more effective, and Problem~\eqref{formulation:athn} gets closer to a minimum-cost network flow problem.
In fact, this situation is reached for the no-flexibility $\Delta = 0$ case, when every arc $(t,t')\in \bar{A}$ either satisfies Rule 1 or Rule 2 and all time constraints are eliminated.
Informally, it is easier to optimize ATHN operations when there is less flexibility, to the point where it becomes provably easy without flexibility.

\paragraph{MIP Start}
In addition to preprocessing, this paper will try to improve the optimization process by providing the solver with an initial feasible solution.
This solution is known as a \emph{MIP start} and provides an upper bound that can assist the branch-and-bound process.
Regardless of the flexibility $\Delta$, a feasible solution can be calculated efficiently by solving the case when flexibility is set to zero.
The calculated solution is then used as a starting point for the actual problem.
To avoid the overhead of building an additional model, the solver is provided a partial MIP start of only $x_t = p(t)$ for all $t\in T$, which is sufficient to find the same solution.

The fact that $\Delta=0$ is easy to solve and guarantees a valid upper bound is specifically because the trucks are autonomous.
The main technical difference is that autonomous trucks are completely interchangable, while human-driven trucks need to be distinguished to ensure that drivers return to their specific starting point \citep{ArunapuramEtAl2003-VehicleRoutingScheduling} or that they do not exceed the maximum driving time \citep{GronaltEtAl2003-NewSavingsBased}.
Network flow relaxations that aggregated drivers have been used to derive lower bounds \citep{GronaltEtAl2003-NewSavingsBased}, but it is not obvious how to transform the outcome into a feasible solution.
For autonomous trucks these human factors do not apply, which enables the framework in this paper.

\subsection{Regional and Temporal Decomposition}
\label{sec:reg_temp}
The framework in this paper is easily extended to perform regional and temporal decomposition.
This requires only minor modifications to the input and to the model.

\paragraph{Regional Decomposition}
The goal of the regional decomposition is to compare the global optimization of ATHN operations to a situation in which each region (e.g., the South of the US) has dedicated autonomous trucks that only pick up loads that start in that region.
In this case, trucks can serve loads within their region and loads that are moving out of the region.
However, after leaving the region to drop off a load, the truck has to return before it can perform another task.
The model will be modified to jointly optimize how trucks are assigned to regions and how to operate the ATHN under these restrictions.
Performing an analysis in this setting helps answer questions about the scale at which autonomous trucks are effective and where they should be deployed.

Regional decomposition can easily be performed by filtering arcs from the task graph.
First assign every task $t\in T$ to a region based on the location of the origin hub $h^+_{l(t)}$.
Next, remove all arcs $(t,t')\in \bar{A}$ between tasks $t, t' \in T$ if the regions are different.
It follows that when a flow reaches a task that starts from a specific region, there is no way to reach tasks that start from a different region, as intended.
The amount of flow from the source to each region represents the amount of trucks that are assigned to that region.
The truck assignments and operations are then jointly optimized by solving this filtered instance of Problem~\eqref{formulation:athn}.

\paragraph{Temporal Decomposition}
The goal of the temporal decomposition is to plan ATHN operations on a rolling horizon (e.g., one week at a time), rather than for a full period at once (e.g., four weeks).
Optimizing over a shorter horizon requires less information and is easier computationally.
However, the model does not explicitly rebalance trucks at the end of the period.
This means that optimizing myopically may leave the trucks ill-positioned for the next period.
The temporal decomposition can be used to explore these trade-offs.

\begin{figure}[!tp]
	\centering
	\includegraphics[width=0.8\linewidth]{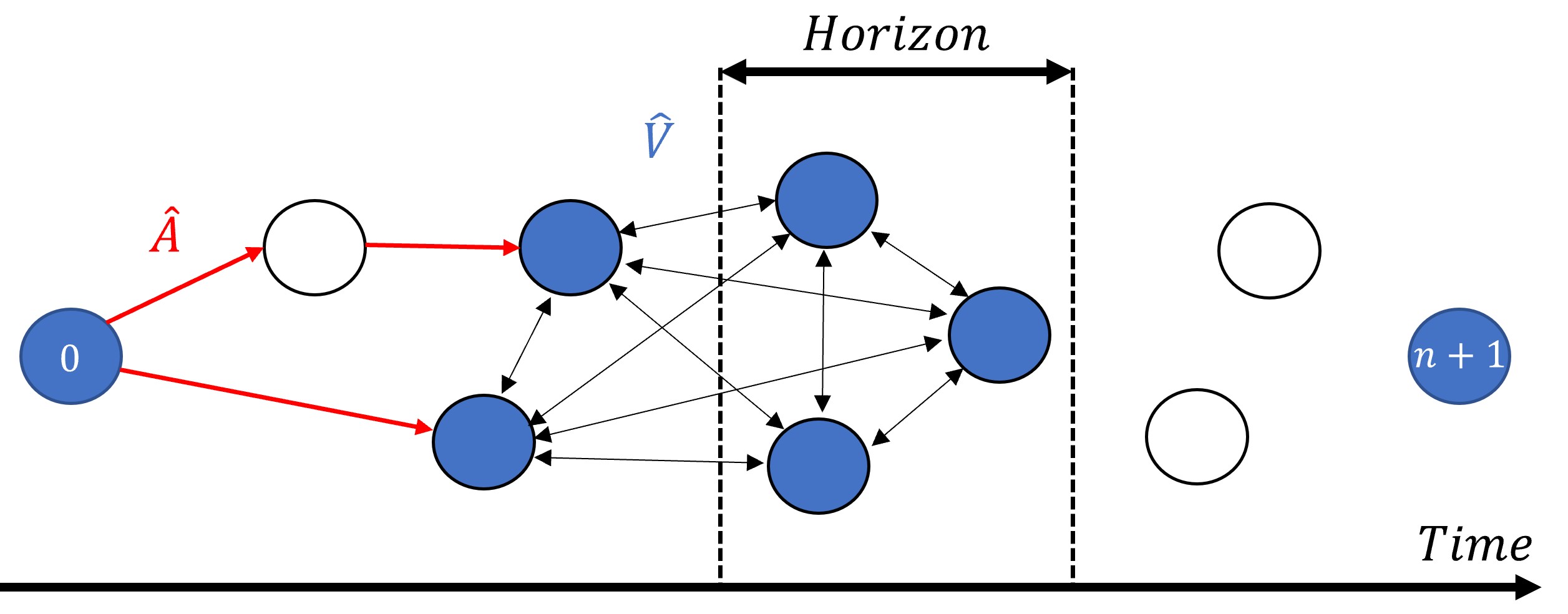}
	\caption{Arc Filtering for Temporal Decomposition ($\hat{A}$ in red and $\hat{V}$ in blue)}
	\label{fig:temporal_decomp}
\end{figure}

Implementing a rolling horizon for Problem~\eqref{formulation:athn} is relatively straightforward.
A practical way to do so is by reusing the existing structures and modifying the model as little as possible.
Figure~\ref{fig:temporal_decomp} provides an illustrative example.
First, build the task graph for the full period.
For a given horizon, identify the arcs $\hat{A}$ of the partial routes created previously (without sink arcs).
Fix these arcs $a \in \hat{A}$ to $y_a = 1$ in the optimization model to stay consistent with the past.
To plan for the current horizon, only the following nodes $\hat{V}$ are relevant: the source, the sink, the current route endpoints, and the tasks that start during the horizon.
Now filter the task graph to only keep the arcs in $\hat{A}$ and the arcs in the subgraph induced by $\hat{V}$.
This makes it impossible to plan outside of the horizon.
The model is solved and the steps are repeated until the full period is planned.

\section{Case Study}
\label{sec:casestudy}

To quantify the impact of autonomous trucking on a realistic transportation network, a case study is presented for the dedicated transportation business of Ryder System, Inc. (Ryder).
Ryder is one of the largest transportation and logistics companies in North America, and provides fleet management, supply chain, and dedicated transportation services to over 50,000 customers.

\paragraph{Data}
Ryder has provided a dataset that is representative for its dedicated transportation business in the US, reducing the scope to orders that are strong candidates for automation.
The case study focuses on the \emph{challenging orders} that currently consist of a single delivery followed by an empty return trip.
These orders are highly inefficient and contribute significantly to the overall empty mileage, such that ATHN can potentially have a big impact.
The challenging orders also allow for a clean comparison to the current situation: These are orders for which Ryder was unable to find a backhaul, and returning empty after delivery is how they would be served in practice.
The challenging orders are converted into loads with an origin, destination, and planned departure time.
The ATHN operations are optimized for loads that start during the first four weeks of October 2019.
This corresponds to 6842 loads, with an average distance of 390 km (242 mi).

\paragraph{Network Design}
To design an effective network, it is important to select hubs that are 1. close to frequently used origins and destinations to minimize the first/last mile, and 2. easily accessible from the highway to enable automation between the hubs.
This is achieved by first using the K-means algorithm in Scikit-learn \citep{PedregosaEtAl2011-ScikitLearnMachine} to cluster the origins and destinations into the desired number of hubs, based on data from October to December 2019.
Next, the centroids are mapped onto the closest US truck stop obtained from the \citet{DepartmentTransportation2019-TruckStopParking}, according to haversine distance.
The origin and destination hubs $h^+_l \neq h^-_l$ for load $l \in L$ are chosen to minimize the value of $c_{o(l)h^+_l} + (1-\gamma) c_{h^+_l h^-_l} + c_{h^-_ld(l)}$, where $\gamma \in [0,1]$ is a discount factor for autonomous trucks.
For $\gamma=0$ this minimizes the total distance, for $\gamma=1$ this minimizes the first/last-mile distance, and $\gamma\in (0, 1)$ minimizes a combination of the two.
By taking both the origin and the destination into account, the rule allows for assigning hubs in the right direction that are not necessarily the closest.
Figure~\ref{fig:athn_design_100} visualizes the 100-hub design for $\gamma=40\%$, in which the area of each hub is proportional to the number of loads that are assigned to it.
It can be seen that many loads are concentrated in the South (purple) and in the Northeast (red).
\begin{figure}[!t]
	\centering
	\includegraphics[width=\linewidth]{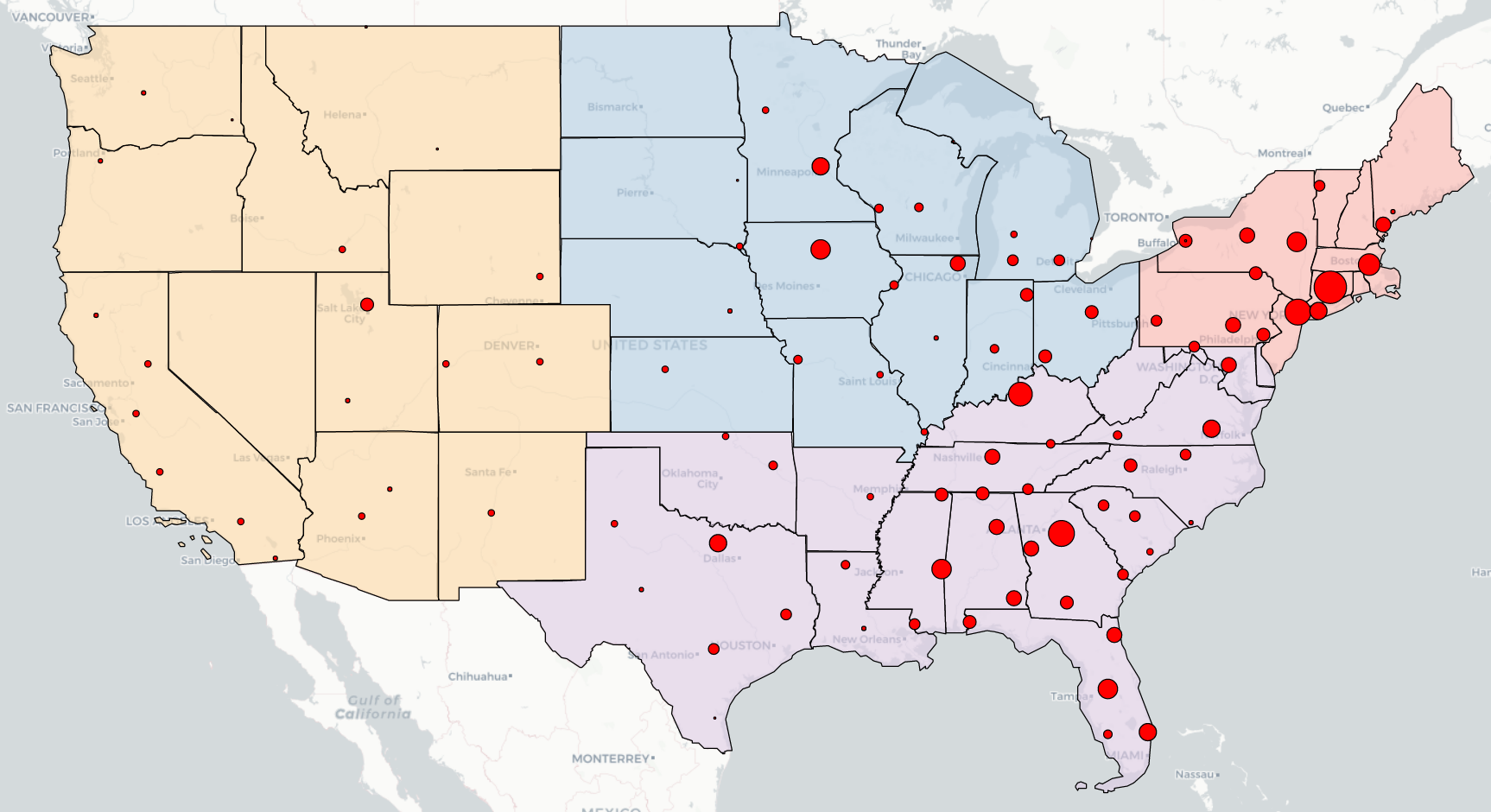}
	\caption{ATHN Design for 100 Hubs (Circle Area Proportional to Number of Assigned Loads)}
	\label{fig:athn_design_100}
\end{figure}

\paragraph{Experimental Settings}
Table~\ref{tab:base_parameters} provides an overview of the baseline parameter values used in the case study.
Various sensitivity analyses will be performed to observe how these parameters affect the system.
The baseline includes two autonomous discount factors: $\alpha=25\%$ and $\alpha=40\%$.
This results in a conservative estimate of the benefits of autonomous trucking, which is predicted to be 29\% to 45\% cheaper per mile \citep{EngholmEtAl2020-CostAnalysisDriverless}.
All experiments use a consistent hub-assignment rule with autonomous discount factor $\gamma=40\%$ to ensure that the results are comparable.
A higher value of $\gamma$ tends towards load paths that include more autonomous mileage.
The baseline also uses multiple values of $K$ to observe the impact of increasing availability of autonomous trucks, with the value $K=100$ as the standard.
These instances are solved sequentially by increasing the right-hand side of Constraint~\eqref{eq:athn:vehicles} and reoptimizing.
All steps in Section~\ref{sec:methodology} are implemented in Python~3.9 and the ATHN operations are optimized with Gurobi~9.5.2.
Gurobi is given three hours of solving time for each instance, unless stated otherwise.
Each experiment is run on a Linux machine with dual Intel Xeon Gold 6226 CPUs on the PACE Phoenix cluster \citep{PACE2017-PartnershipAdvancedComputing}, using a single node with 24 cores and 192GB of RAM.
If memory is insufficient, the experiment is repeated on a machine with 384GB of RAM.
Note that if high-memory machines are not available, memory could also be traded for computing time by reducing the number of parallel threads.

\begin{figure*}[!tp]
	\scriptsize
	\centering
	\begin{tabular}{ll}
		\toprule
		Parameter & Value \\
		\midrule
		$n$ & 6842 loads\\
		$\lvert V_H \rvert$ & 100 transfer hubs\\
		$K$ & $\in \{0, 50, \underline{100}, 150, 200, 250\}$ autonomous trucks\\
		$\Delta$ & 1 hour pickup-time flexibility\\
		$S$ & 30 minutes autonomous truck loading/unloading time\\
		$\alpha$ & $\in \{25\%, 40\%\}$ discount for autonomous mileage\\
		$\beta$ & 25\% first/last-mile inefficiency\\
		$\gamma$ & 40\% discount for autonomous mileage during hub-assignment\\
		Preprocessing & Applied\\
		MIP start & Disabled\\
		Solver time limit & 3 hours\\
		\bottomrule
	\end{tabular}
	\captionof{table}{Baseline Parameter Values for the Case Study.}%
	\label{tab:base_parameters}%
\end{figure*}

\section{Baseline Results}
\label{sec:baseline}

This section discusses the baseline results for the case study.
It first presents computational results to demonstrate that the presented framework can handle large-scale systems.
Next, it analyzes the impact of autonomous trucking for the case study.

\subsection{Computational Results}
\label{sec:computational}

Table~\ref{tab:base_computation} presents the computational results for the baseline instances.
The instances differ by the discount for autonomous mileage $\alpha$ and the number of vehicles $K$.
As described in the previous section, the instances for different $K$ are run sequentially and reuse the same model, as would be done in practice to study the system.
The `LP Relaxation' columns present the time to solve the linear programming relaxation (before cuts) and the corresponding root gap.
The `Branch and Bound' columns summarize the full branch-and-bound process, reporting the number of nodes in the tree, the solution time (10,800 if the time limit of three hours is reached), and the final gap.
The table omits the time for building the model, which was less than six minutes, and the time for presolve, which took less than four minutes in all cases.

Despite the fact that each model has over 22M binary variables and close to 300k constraints, Gurobi is able to find optimal solutions in most cases.
For $K=0$, the problem is trivial and is solved immediately in presolve.
The cases with fewer vehicles are challenging to the solver, presumably because the tasks are packed more densely into the schedule, as will be discussed in Section~\ref{sec:sens:number_trucks}.
Only the $K=50$ cases where not solved to optimality, and remain at 0.03\% and 0.18\% gap.
Both for $\alpha=25\%$ and $\alpha=40\%$ the solver tends to keep adding cutting planes rather than branch.
This strategy is successful to solve all other instances to optimality within the time limit.
The only instance that stands out is $\alpha=40\%$ and $K=100$, which explores 3382 nodes in the branch-and-bound tree.
For this instance, the log shows that a gap of 0.01\% is found after 4845 seconds.
When the gap is still at 0.01\% at 5921 seconds, the solver decides to start branching to close the gap.
This behavior can likely be explained by symmetry in the solution space, e.g., if two vehicles swap half of their tasks, the solution is likely to be of similar quality.
The result is that good solutions are found quickly, but it takes a substantial number of cuts or branches to find the optimum.
Overall, the computations for the baseline instances show that the proposed methodology can find optimal or close to optimal solutions in a short amount of time compared to the planning horizon. 

\begin{table}[!t]
	\centering
	\begin{tabular}{rrrrrrr}
		\toprule
		\multicolumn{2}{c}{Parameters} & \multicolumn{2}{c}{LP Relaxation} & \multicolumn{3}{c}{Branch and Bound} \\
		\midrule
		$\alpha$ & $K$     & Seconds & Gap \% & Nodes & Seconds & Gap (\%) \\
		\midrule
		\multirow{6}[1]{*}{25\%} & 0     & 0  & 0  & 0     & 0  & 0  \\
		& 50    & 218  & 19.50  & 1     & 10,800 & 0.03  \\
		& 100   & 170  & 6.44  & 1     & 4,133  & 0  \\
		& 150   & 183  & 1.81  & 1     & 3,261  & 0  \\
		& 200   & 181  & 0.70  & 1     & 2,754  & 0  \\
		& 250   & 177  & 0.40  & 1     & 2,850  & 0  \\
		\midrule
		\multirow{6}[1]{*}{40\%} & 0     & 0  & 0  & 0     & 0  & 0  \\
		& 50    & 268  & 25.10  & 1     & 10,800 & 0.18  \\
		& 100   & 206  & 10.40  & 3,382  & 6,430  & 0  \\
		& 150   & 232  & 3.20  & 1     & 2,972  & 0  \\
		& 200   & 207  & 1.00  & 1     & 3,149  & 0  \\
		& 250   & 189  & 0.74  & 1     & 2,026  & 0  \\
		\bottomrule
	\end{tabular}
	\caption{Baseline Computation Statistics.}
	\label{tab:base_computation}%
\end{table}%

\paragraph{Using MIP starts}
Enabling MIP start forces the solver to construct an initial feasible solution before starting the search (Section~\ref{sec:methodology}).
Figure~\ref{fig:mipstart_gap} provides an example of the effect of enabling MIP start for the $\alpha = 25\%$ baseline with $K = 100$ trucks.
Note that these tests are run independently without reusing the model for $K=50$ as in the experiments above.
Without any guidance, Gurobi takes 520 seconds to report the first optimality gap of 33\%.
At 741 seconds, a significantly better solution of 1.25\% gap is found, and the problem is solved to optimality in under an hour.
When MIP start is enabled, it takes more time for the search proper to start, and the first gap is reported at 1139 seconds.
However, spending time to construct an initial feasible solution immediately leads to a gap of only 1.19\% because of the improved upper bound.
The full problem is solved in under one hour and 15 minutes.

\begin{figure}[!t]
	\centering
	\includegraphics[width=0.5\textwidth]{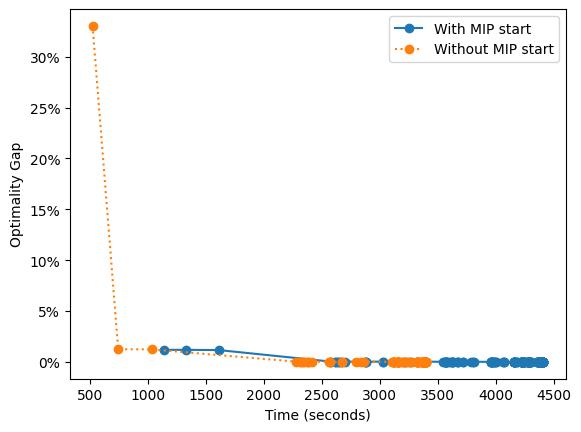}
	\caption{Optimality Gap over Time for $\alpha=25\%$ and 100 Trucks.}
	\label{fig:mipstart_gap}
\end{figure}

Two observations are made for the case study.
First, using a MIP start does not seem to improve solution time, but it does create a more predictable result.
Especially if the instance cannot be solved to optimality, the MIP start is more likely to produce a reasonable solution before the time limit.
Second, the small initial gap for the MIP start suggests that the initial solution is already of high quality.
Recall that this zero-flexibility $\Delta=0$ case can be seen as a min-cost flow problem, which gives practitioners the possibility to avoid commercial software and instead plan ATHN operations with highly-efficient open source solvers such as the LEMON graph library \citep{DezsoEtAl2011-LemonOpenSource}.
As most instances can be solved to optimality, MIP starts will only be enabled for the difficult large-flexibility instances in Section~\ref{sec:timeflex}.

\subsection{Impact of Autonomous trucking}
Figure~\ref{fig:base} presents the impact of autonomous trucking for the baseline, where autonomous mileage is discounted by either $\alpha=25\%$ or $\alpha=40\%$.
It is clear that introducing autonomous trucks leads to substantial benefits.
Figure~\ref{fig:base_cost} shows that the first 50 trucks already lower the operational cost of the system (including first/last miles) by the equivalent of more than one million traditional kilometers.
E.g., at \$1.25/km ($\approx$ \$2/mile) for traditional trucks, this corresponds to a value of about \$1.3M per four weeks or \$16.9M per year.
The percentage savings for the overall system are provided in Figure~\ref{fig:base_save_overall}.
These savings range from 20\% for 50 trucks in the more expensive scenario to 37\% for 250 trucks when autonomous trucking is less expensive.
It is interesting to observe that adding vehicles clearly satisfies the law of diminishing returns.
As more vehicles are added, more loads are served autonomously (Figure~\ref{fig:base_load}) and more savings are obtained (Figure~\ref{fig:base_save_overall}), but the benefits level out at about 100 trucks for the Ryder case study.
Note that this is a relatively small number of trucks compared to the 6842 loads, which reflects the fact that autonomous trucks can operate around the clock.
Figure~\ref{fig:base_save} looks at the savings percentage only for loads that are served autonomously.
It can be seen that, on average, loads that are served autonomously save between 31\% and 42\% in costs compared to traditional transportation.

\begin{figure}[!t]
	\centering
	\begin{subfigure}{0.47\textwidth}
		\centering
		\includegraphics[width=\textwidth]{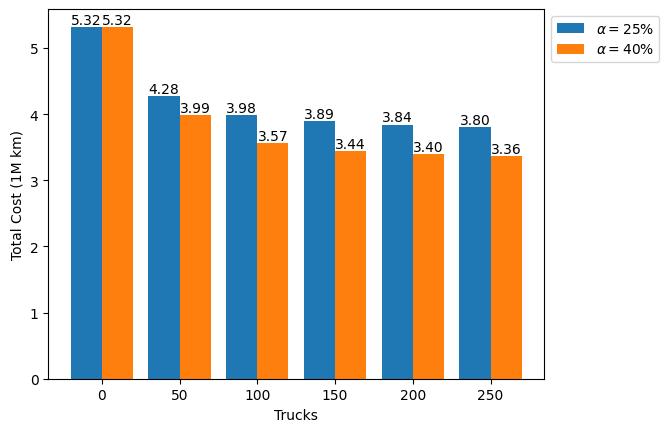}
		\caption{Total Cost}
		\label{fig:base_cost}
	\end{subfigure}
	\begin{subfigure}{0.48\textwidth}
		\centering
		\includegraphics[width=\textwidth]{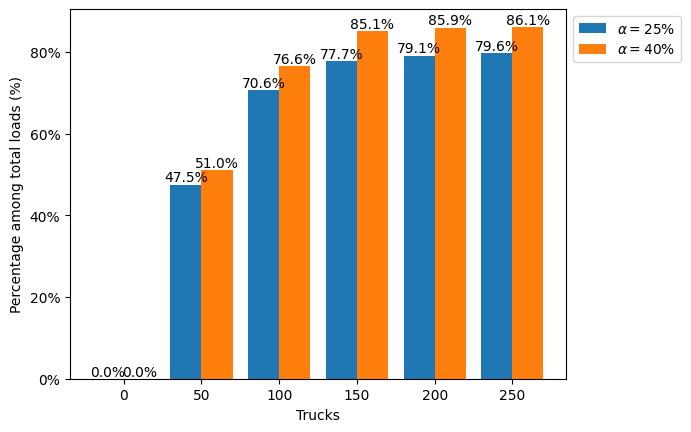}
		\caption{Loads Served Autonomously}
		\label{fig:base_load}
	\end{subfigure}
	\begin{subfigure}{0.47\textwidth}
		\centering
		\includegraphics[width=\textwidth]{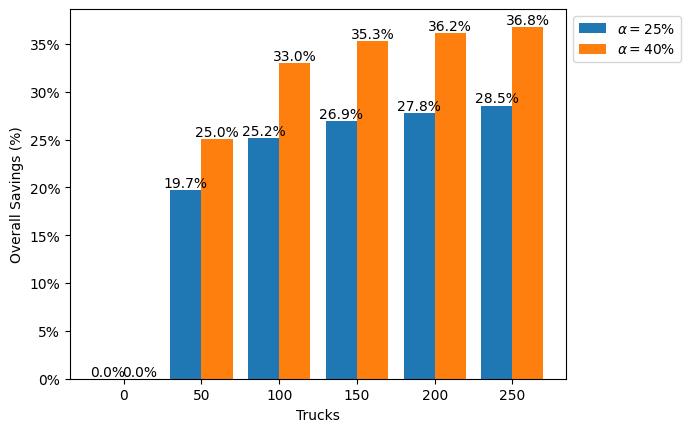}
		\caption{Total Savings}
		\label{fig:base_save_overall}
	\end{subfigure}
	\begin{subfigure}{0.47\textwidth}
		\centering
		\includegraphics[width=\textwidth]{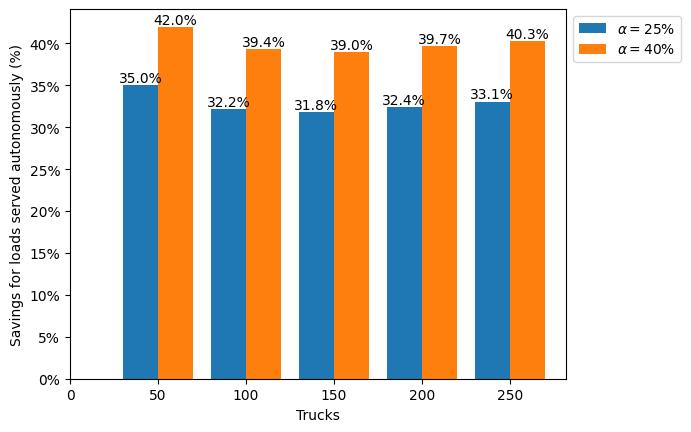}
		\caption{Savings Loads Served Autonomously}
		\label{fig:base_save}
	\end{subfigure}
	\caption{Baseline Results}
	\label{fig:base}
\end{figure}

Table~\ref{tab:base_stats} and Figure~\ref{fig:base_gantt} dive deeper into the results for $\alpha=25\%$ and 100 trucks specifically.
The table shows that the total distance driven in the ATHN (including empty miles) is 13.0\% lower than for the current system.
These savings are due to the flexibility of autonomous trucks that can operate throughout the night and never need to return home.
This allows for only 29\% empty miles on the autonomous middle mile, which is a substantial improvement over the rate of 50\% in the current system.
When labor cost reduction is taken into account, the savings increase to 25.2\%.
The truck schedule in Figure~\ref{fig:base_gantt} also shows that relocations are small compared to the work performed: Every row represents a single truck, where blue bars correspond to performing tasks, and the red bars correspond to driving empty.
The schedule is relatively tight, except for the four `gaps' during the weekends, in which not many loads are planned.
This is an artifact of the data that results from planning around people, Section~\ref{sec:results} will explore the value of increasing flexibility and allowing autonomous trucks to pick up loads on any day.

\begin{table}[!t]
	\centering
	\begin{tabular}{llclrccr}
		\toprule
		& Service & Loads & Segment & \multicolumn{1}{c}{Total km} & Empty & Cost Factor & \multicolumn{1}{c}{Cost} \\
		\midrule\midrule
		Current & Direct & 6842 & Full & 5,323,357 & 50\% & 1 & 5,323,357 \\
		\midrule\midrule
		ATHN & Autonomous & 4828 & Middle & 2,598,418 & 29\% & 0.75 & 1,948,814\\
		& & & First/last & 875,215 & 25\% & 1 & 875,215\\
		\cmidrule{2-8}
		& Direct & 2014 & Full & 1,159,133 & 50\% & 1 & 1,159,133\\
		\cmidrule{2-8}
		& Total & 6842 & & 4,632,766 & & & 3,983,162 \\
		\midrule\midrule
		\multicolumn{4}{r}{Savings ATHN compared to Current} & 690,591 & & & 1,340,195 \\
		& & & & 13.0\% & & & 25.2\%\\
		\bottomrule
	\end{tabular}
	\caption{Statistics for $\alpha=25\%$ and 100 Trucks.}%
	\label{tab:base_stats}%
\end{table}%

\begin{figure}[!t]
	\centering
	\includegraphics[width=\linewidth]{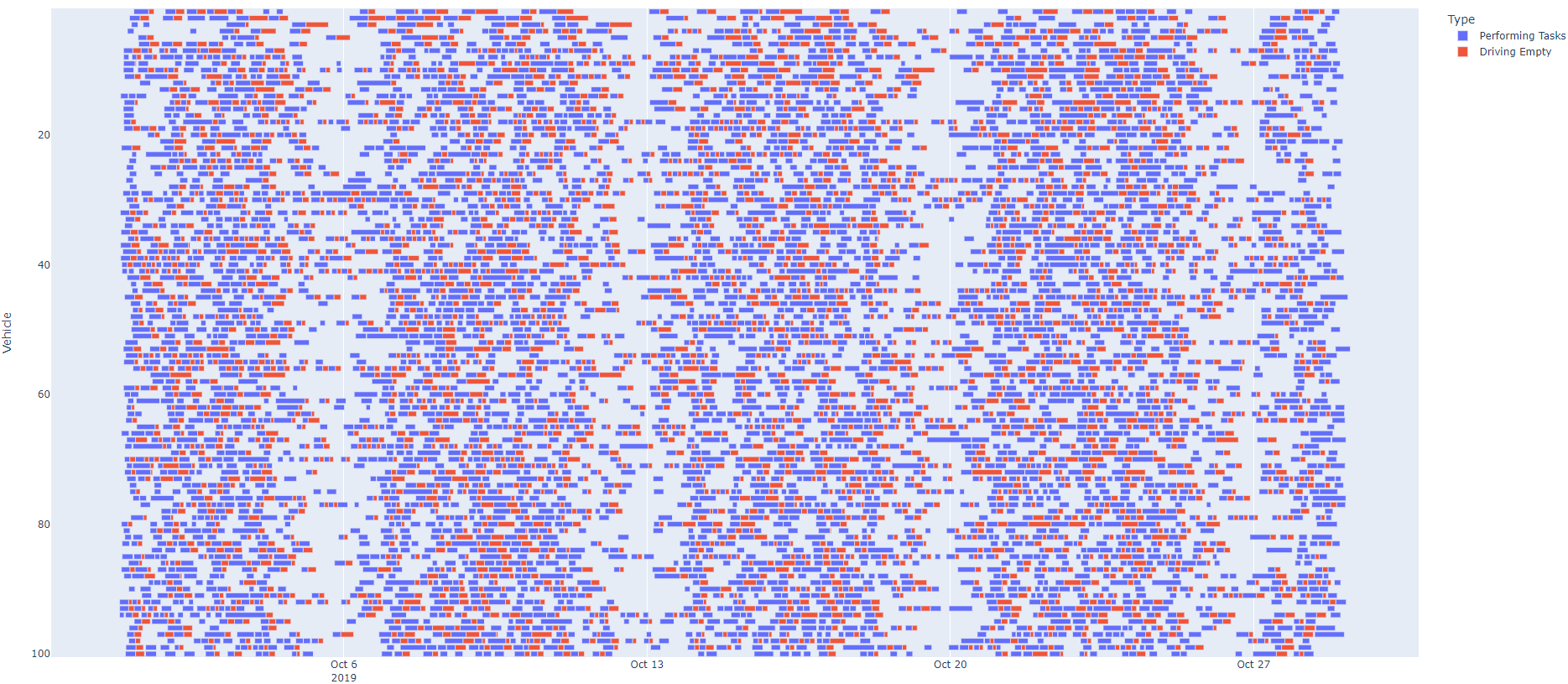}
	\caption{Truck Schedule for $\alpha=25\%$ and 100 Trucks.}
	\label{fig:base_gantt}
\end{figure}


\section{Sensitivity Analysis}
\label{sec:results}

The baseline results demonstrate significant benefits of autonomous trucking for the case study.
This section provides an extensive sensitivity analysis of these results.
It explores flexibility in pickup times, the trade-off between operating cost and autonomous truck utilization, the effect of regional and temporal decomposition, and various changes in input parameters.

\subsection{Pickup-time Flexibility}
\label{sec:timeflex}

Even for a limited pickup-time flexibility of one hour, the baseline showed significant benefits of ATHN compared to traditional transportation.
This section explores whether increasing the flexibility to up to 24 hours further improves network performance.
If there are major gains in efficiency, it may be worth negotiating new pickup times or new service-level agreements with the customers.
From a computational standpoint, Section~\ref{sec:computational} explained that large-flexibility instances are more difficult to solve.
For this reason, this section will rely heavily on MIP starts.

Figures~\ref{fig:delta_25} and \ref{fig:delta_40} present upper bounds (feasible solutions) and lower bounds for pickup-time flexibilities ranging from 0 to 24 hours.
In terms of computational performance, the figures show that the 0h and 1h instances can be solved to optimality, the 2h instances have a small remaining gap, and the 12h and 24h instances prove difficult to solve.
The exact optimality gaps are reported in Table~\ref{tab:delta_gap}.
It can be seen that for the difficult instances, using a MIP start really helps to obtain better solutions, e.g., reducing the gap from 30\% to 5\% in the $\alpha=40\%$ and $\Delta=24$h case.
Increasing the flexibility from 1h to 2h reduces the overall cost of the system by 0.9\% for $\alpha=25\%$ and 1.8\% for $\alpha=40\%$.
With the current algorithm, increasing the flexibility to 12h and 24h does not lead to better solutions (upper bounds).
However, the lower bounds show potential savings of up to 5.3\% compared to the $\Delta=1$h baseline.
This makes it an interesting direction for future research to develop methods that are effective for larger flexibility.
For $\alpha=40\%$ and $\Delta=24$h, the solver is able to improve over the MIP start solution.
The resulting schedule is presented by Figure~\ref{fig:delta_schedule}.
It shows that the increased flexibility creates a schedule that is more sparse and departs from the current weekly pattern shown by Figure~\ref{fig:base_gantt}.

\begin{figure}[!t]
	\centering
	\begin{subfigure}{0.47\textwidth}
		\centering
		\includegraphics[width=\textwidth]{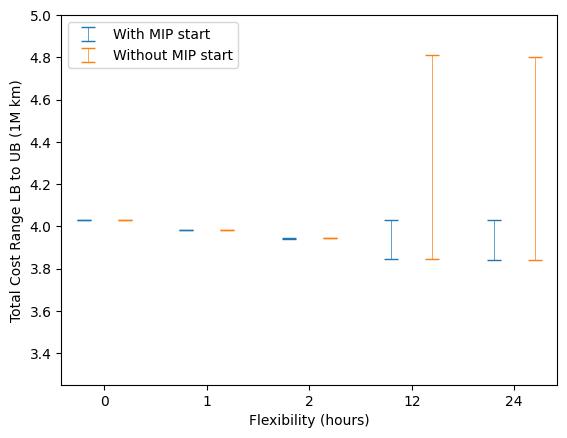}
		\caption{Case $\alpha=25\%$ and 100 trucks.}
		\label{fig:delta_25}
	\end{subfigure}
	\begin{subfigure}{0.47\textwidth}
		\centering
		\includegraphics[width=\textwidth]{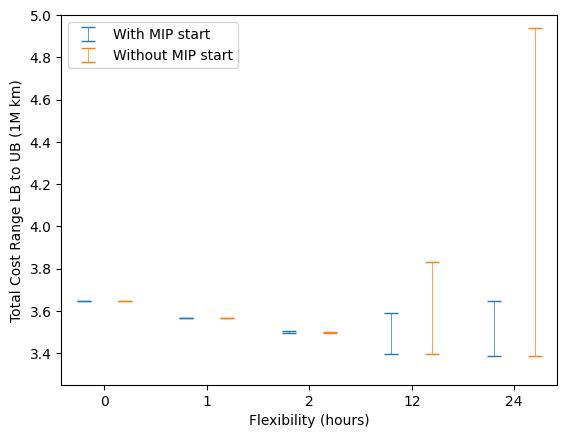}
		\caption{Case $\alpha=40\%$ and 100 trucks.}
		\label{fig:delta_40}
	\end{subfigure}
	\caption{Upper and Lower Bounds for Different Time Flexibilities $\Delta$.}
	\label{fig:delta}
\end{figure}

\begin{table}[!t]
	\centering
	\begin{tabular}{rrrrr}
		\toprule
		~ & \multicolumn{2}{c}{$\alpha=25\%$} & \multicolumn{2}{c}{$\alpha=40\%$} \\
		\midrule
		$\Delta$ & \multicolumn{1}{r}{Without MIP start} & \multicolumn{1}{r}{With MIP start} & \multicolumn{1}{r}{Without MIP start} & \multicolumn{1}{r}{With MIP start} \\
		\midrule
		2h  & 0.04\% & 0.07\% & 0.17\% & 0.20\%\\
		12h & 20.05\% & 4.52\% & 11.38\% & 5.50\% \\
		24h & 14.80\% & 4.69\% & 29.77\% & 4.62\% \\
		\bottomrule
	\end{tabular}%
	\caption{Optimality Gaps at Time Limit for Large-Flexibility Instances.}%
	\label{tab:delta_gap}
\end{table}

\begin{figure}[!t]
	\centering
	\includegraphics[width=\linewidth]{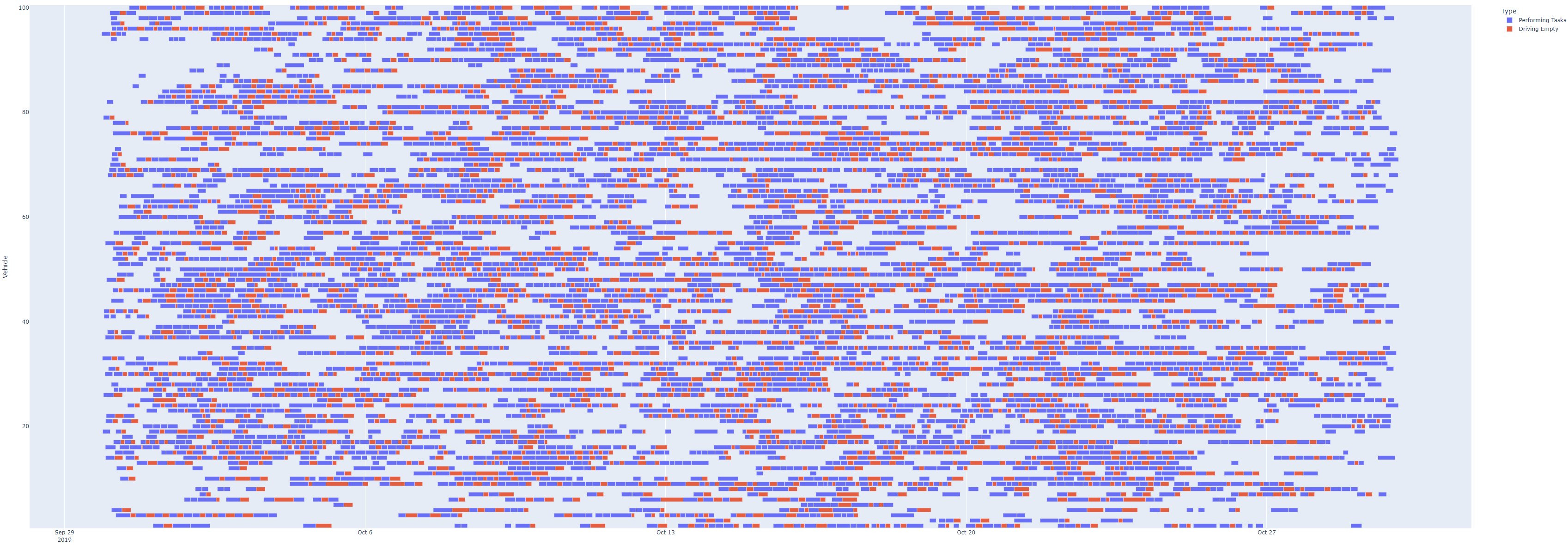}
	\caption{Truck Schedule for Increased Flexibility of $\Delta=24$h ($\alpha=40\%$, MIP start enabled)}
	\label{fig:delta_schedule}
\end{figure}


\subsection{Number of Autonomous Trucks}
\label{sec:sens:number_trucks}
This section explores the trade-off between operating cost and autonomous truck utilization in the ATHN.
Figure~\ref{fig:maxk_inactive} presents the total cost of the ATHN operations from zero to 250 trucks in increments of 10, and all solutions are within 0.25\% of optimality.
The \emph{inactivity} is defined as the percentage of time that autonomous trucks are waiting and not performing any tasks.
The convex cost curves clearly show that the first autonomous trucks will be the most impactful and will almost never be inactive.
This property makes it easier to run successful pilots and encourage the adoption of ATHN.
Cost improvements start leveling out as more vehicles are added to the system.
Inactivity goes up because more vehicles are available, but also because there is sufficient extra capacity to wait for the next load at the current location rather than to relocate.

Figure~\ref{fig:maxk_payback} analyzes how long it would take to make back the purchasing costs of the autonomous trucks with the savings obtained by the ATHN.
The plot presents results for autonomous trucks that cost \$150k to \$250k each, based on the \$200k estimated by \citep{ArizonaBankTrust-AutonomousTruckCost}.
For the baseline scenario with $K=100$ vehicles, the figure shows a payback period of less than 1.5 years, demonstrating a rapid return on investment for all parameters.
Again it can be seen that the first vehicles are the most profitable, and a system with only 10 trucks would pay back the trucks within half a year.

\begin{figure}[!t]
	\centering	\includegraphics[width=0.6\textwidth]{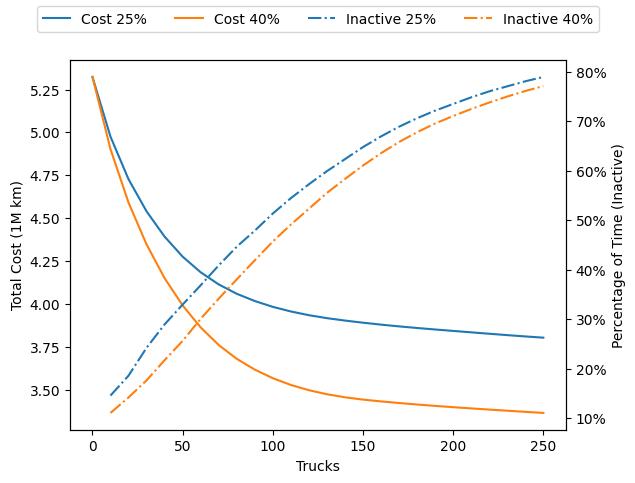}
	\caption{Total Cost and Autonomous Truck Inactivity}
	\label{fig:maxk_inactive}
\end{figure}

\begin{figure}[!t]
	\centering	\includegraphics[width=0.90\textwidth]{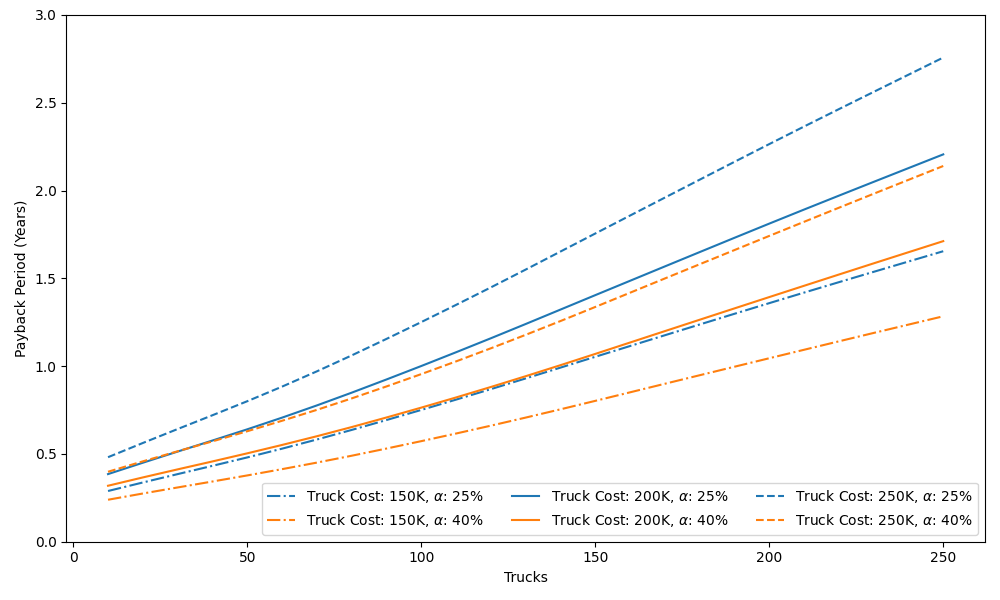}
	\caption{Payback Period for Capital Investments in Autonomous Trucks}
	\label{fig:maxk_payback}
\end{figure}

The number of autonomous trucks in the system also affects the routes that are created.
Figures~\ref{fig:route_10} and~\ref{fig:route_150} present routes taken from the $K = 10$ and $K = 150$ vehicle solutions, respectively.
Blue arrows represent loaded trucks, and red arrows correspond to driving empty.
For the $K = 10$ case, nine of the ten routes are clustered in the East (Figures~\ref{fig:east_route_1} and~\ref{fig:east_route_2}), and one route also serves loads in the West (Figure~\ref{fig:west_route}).
It is clear that the optimization uses the limited number of trucks to serve the most dense region, as shown in Figure~\ref{fig:athn_design_100}.
When the number of trucks increases, Figure~\ref{fig:route_150} shows that routes start to cover all regions of the US.
The Ryder case study therefore suggests a roll-out strategy that starts in the East and expands from there.

\begin{figure}[!tp]
	\centering
	\begin{subfigure}{0.3275\textwidth}
		\centering
		\includegraphics[width=\textwidth]{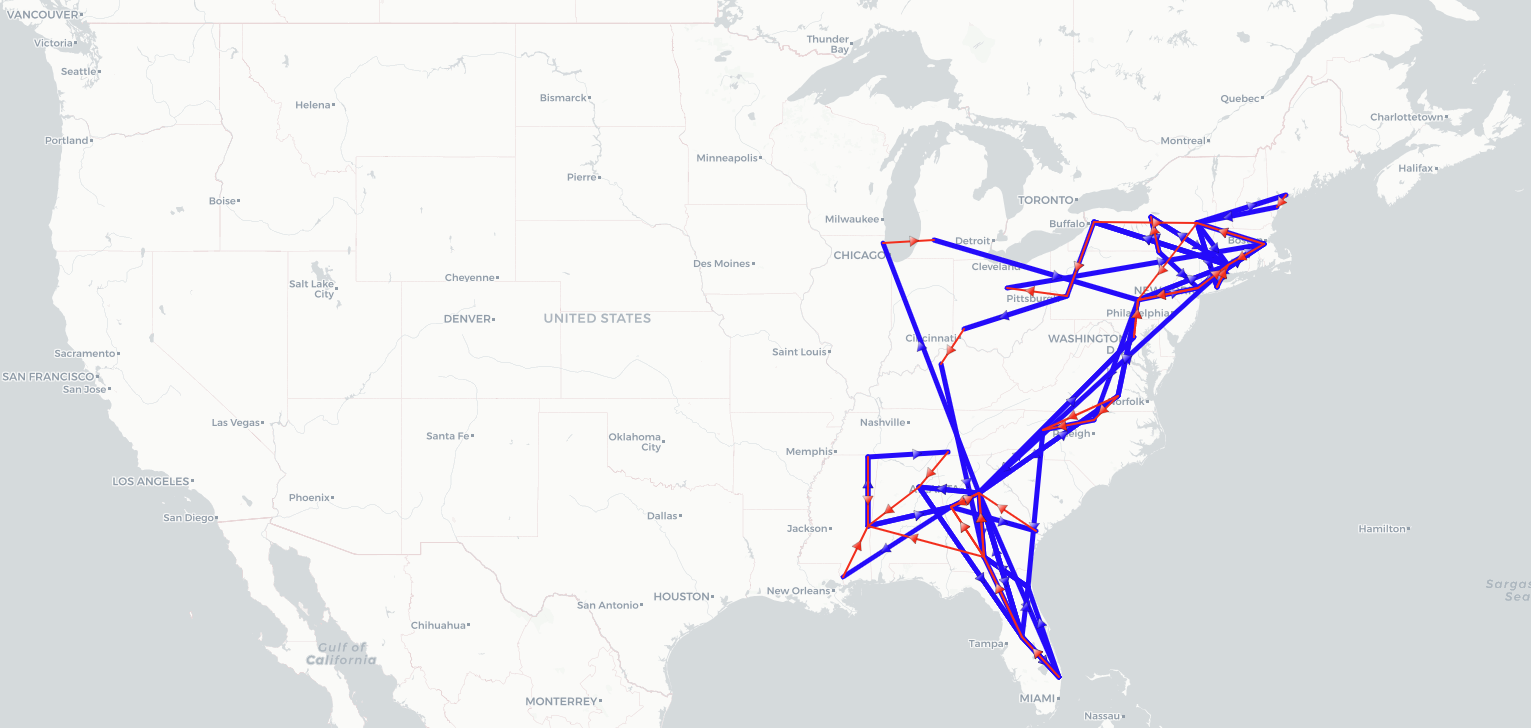}
		\caption{East-focused Route 1}
		\label{fig:east_route_1}
	\end{subfigure}
	\begin{subfigure}{0.3275\textwidth}
		\centering
		\includegraphics[width=\textwidth]{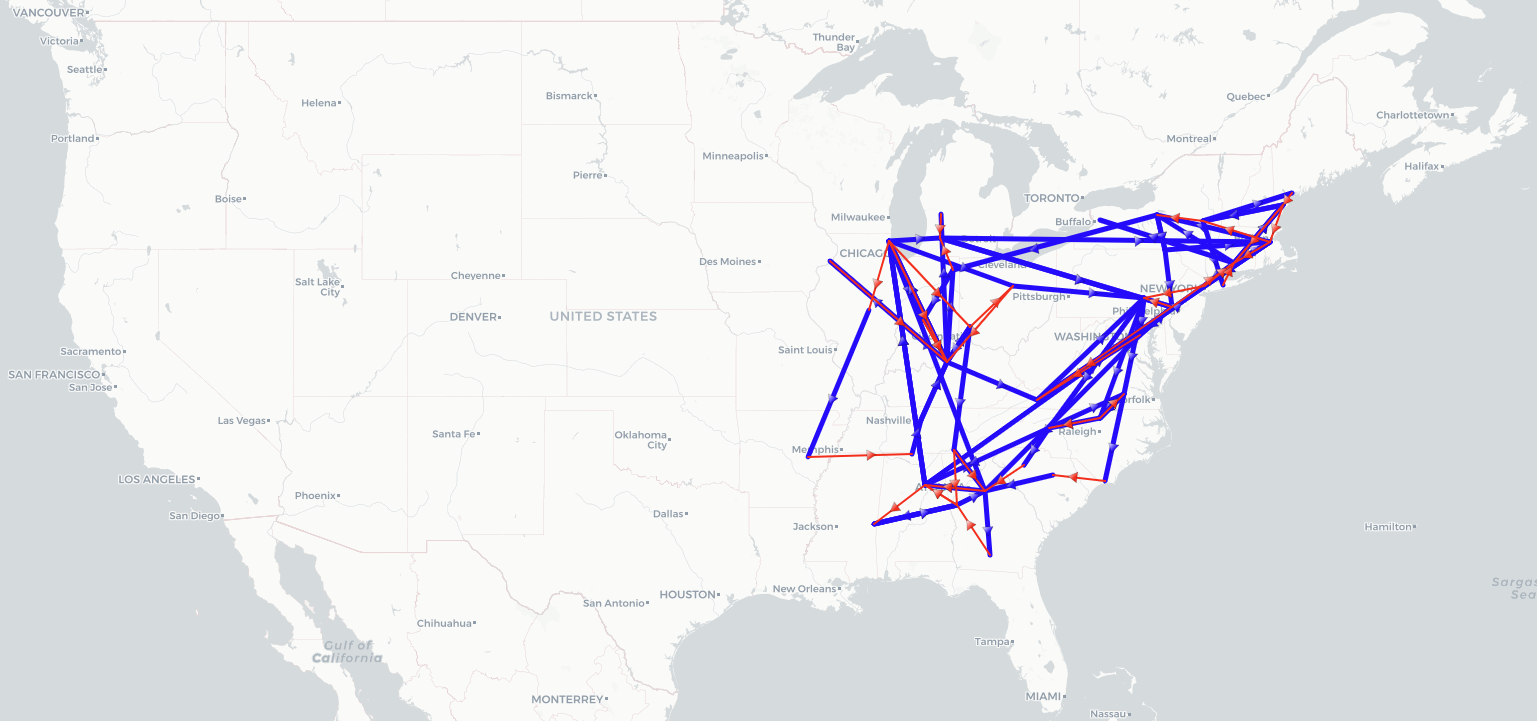}
		\caption{East-focused Route 2}
		\label{fig:east_route_2}
	\end{subfigure}
	\begin{subfigure}{0.3275\textwidth}
		\centering
		\includegraphics[width=\textwidth]{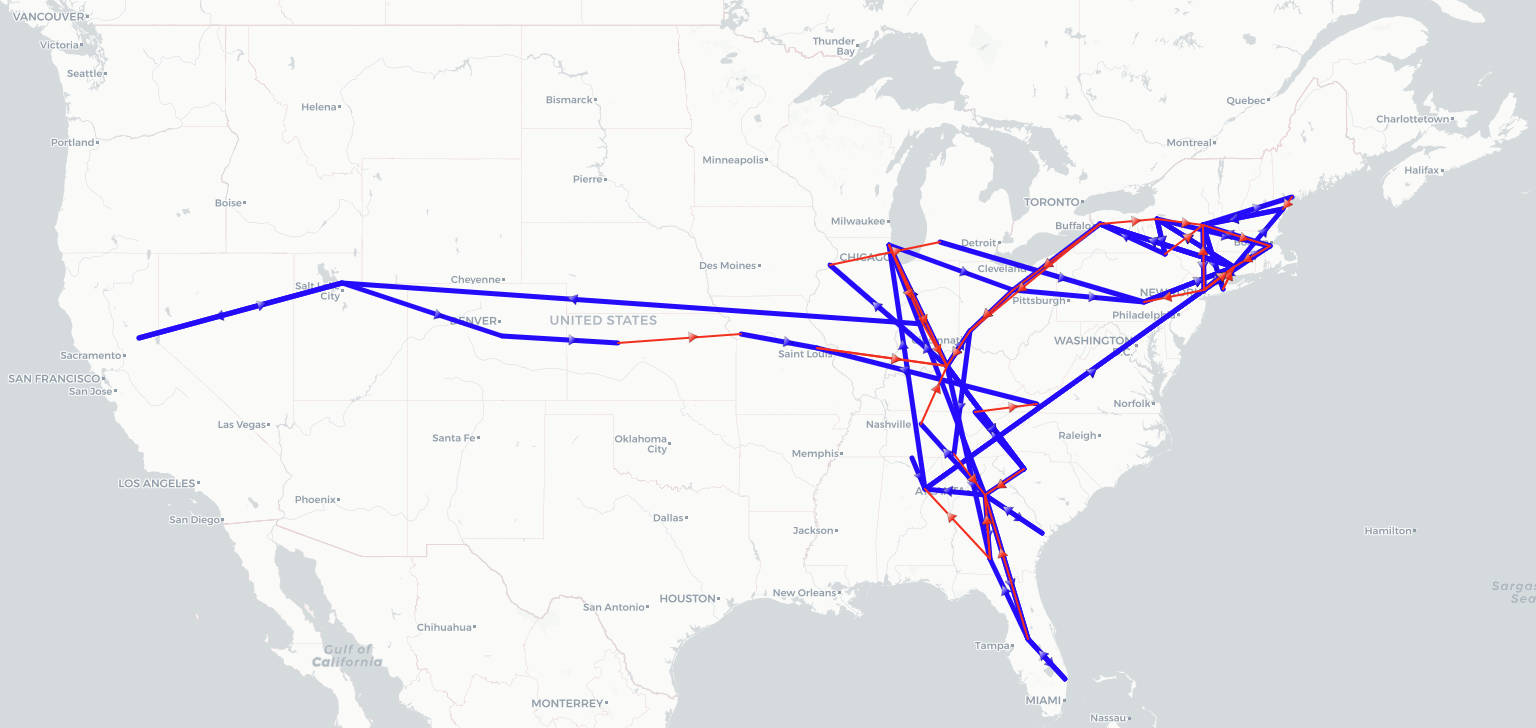}
		\caption{West-focused Route}
		\label{fig:west_route}
	\end{subfigure}
	\caption{Example Routes for $K = 10$}
	\label{fig:route_10}
\end{figure}

\begin{figure}[!tp]
	\centering
	\begin{subfigure}{0.3275\textwidth}
		\centering
		\includegraphics[width=\textwidth]{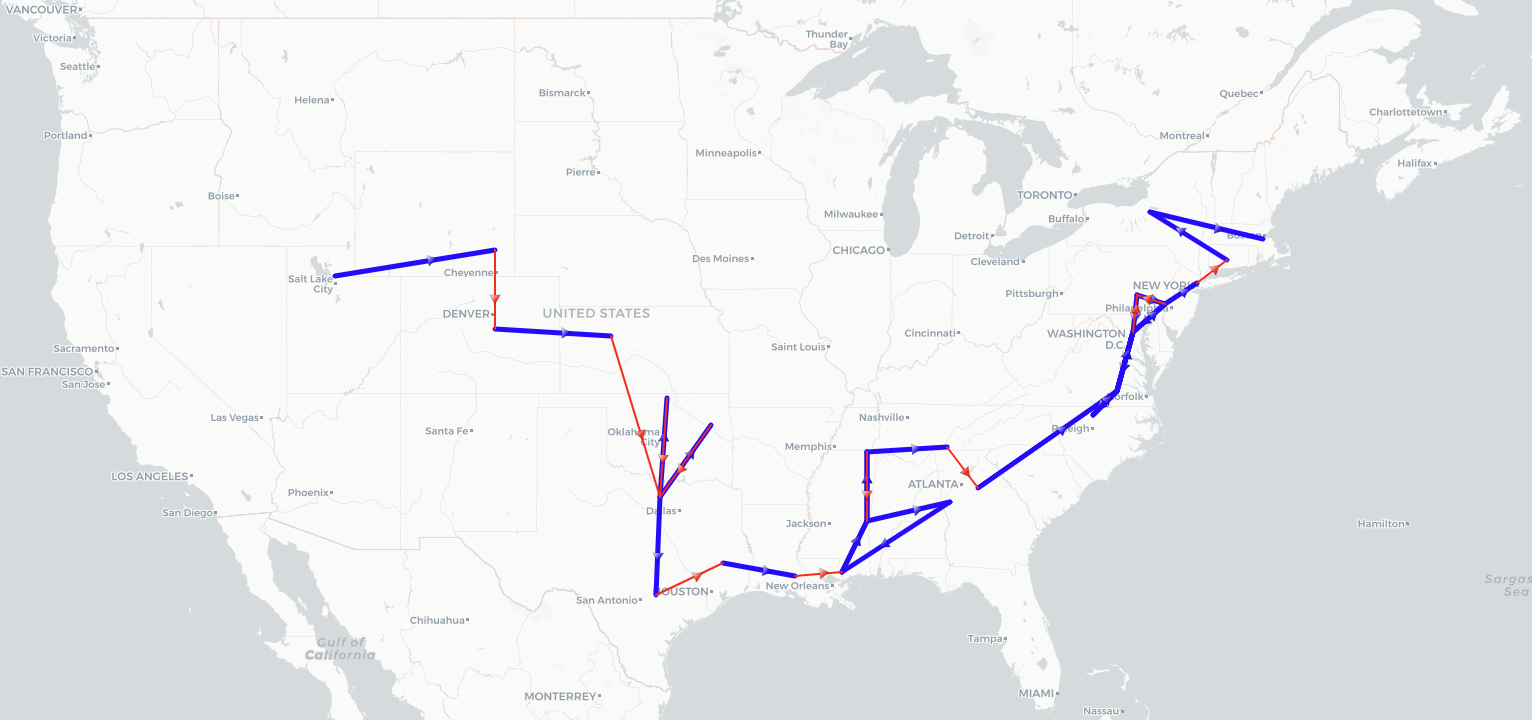}
		\caption{Route 1}
		\label{fig:route_1}
	\end{subfigure}
	\begin{subfigure}{0.3275\textwidth}
		\centering
		\includegraphics[width=\textwidth]{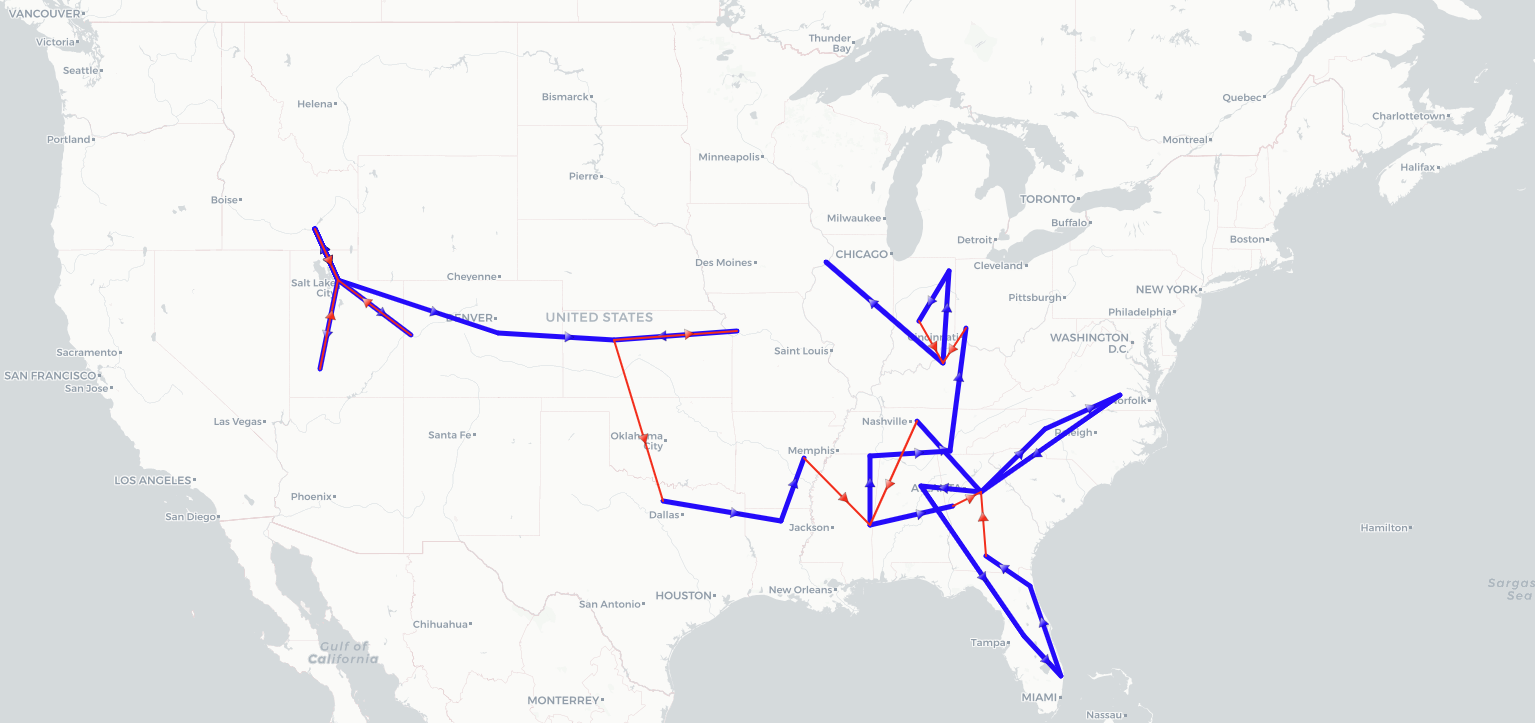}
		\caption{Route 2}
		\label{fig:route_2}
	\end{subfigure}
	\begin{subfigure}{0.3275\textwidth}
		\centering
		\includegraphics[width=\textwidth]{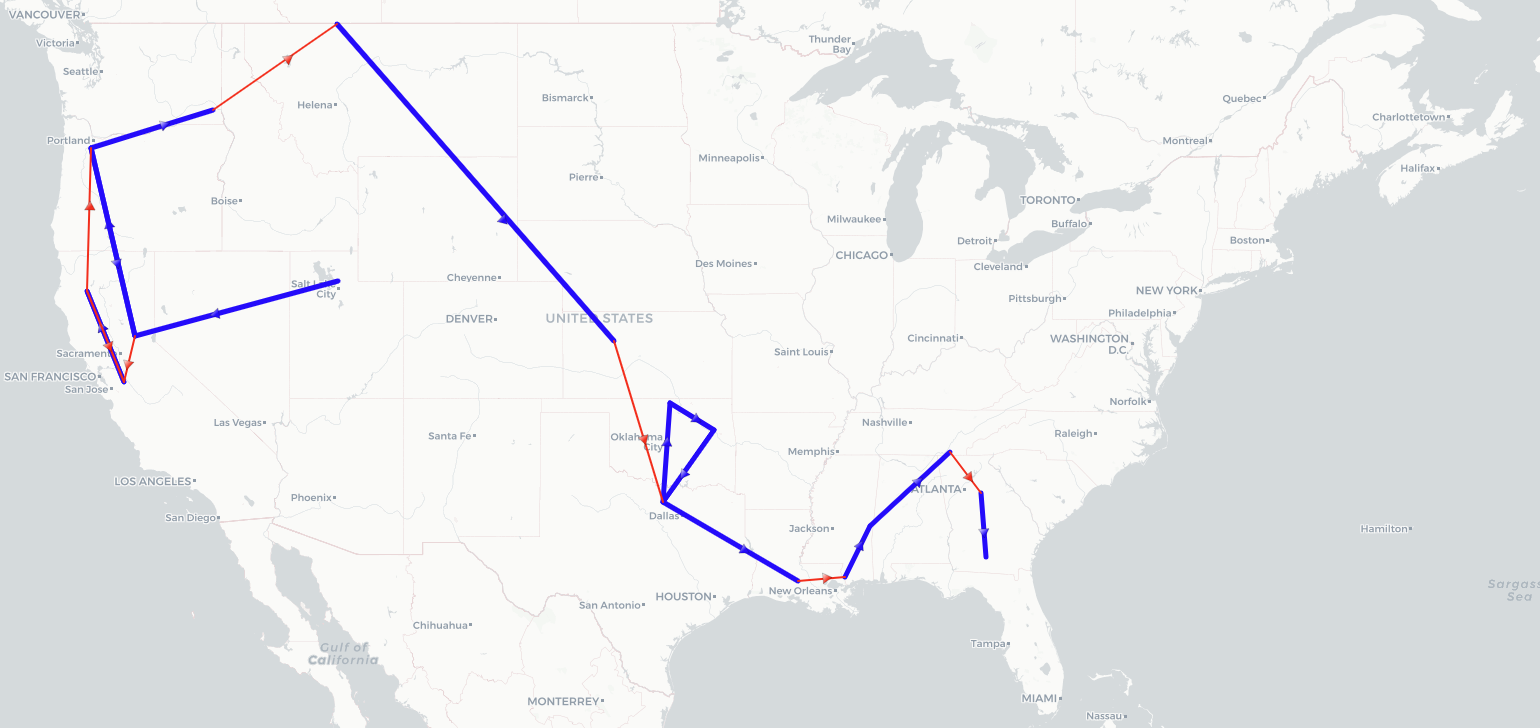}
		\caption{Route 3}
		\label{fig:route_3}
	\end{subfigure}
	\caption{Example Routes for $K = 150$}
	\label{fig:route_150}
\end{figure}


\subsection{Regional and Temporal Decomposition}
In practice it may be necessary to plan ATHN operations for a specific region or for a specific time horizon.
Examples are if states do not share order information, or if loads are only announced a week in advance.
Decomposing the problem is also a strategy to speed up the optimization.

\paragraph{Regional Decomposition}
The US is comprised of smaller areas where the ATHN framework can be applied separately according to Section~\ref{sec:reg_temp}.
This study focuses on the impact of regional decomposition on the ATHN problem. 
Three regional decomposition schemes: 1. Region, 2. Division, and 3. State, adopted from the \citet{CensusBureau2010-CensusRegionsDivisions}, are used to solve the base cases with $K = 100$.

\begin{figure}[!tp]
	\centering
	\begin{subfigure}{0.47\textwidth}
		\centering
		\includegraphics[width=\textwidth]{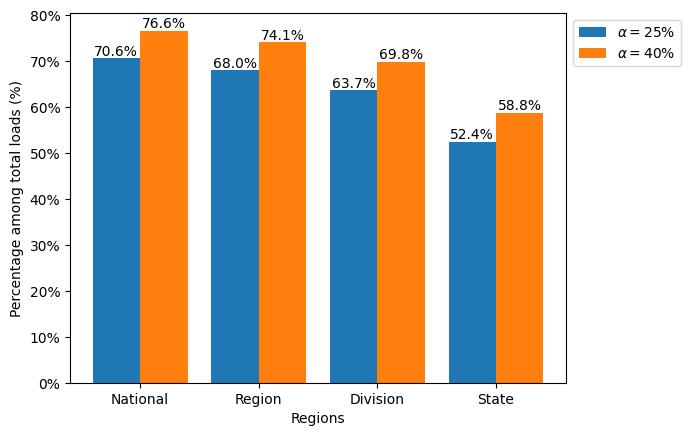}
		\caption{Loads}
		\label{fig:regional_load}
	\end{subfigure}
	\begin{subfigure}{0.47\textwidth}
		\centering
		\includegraphics[width=\textwidth]{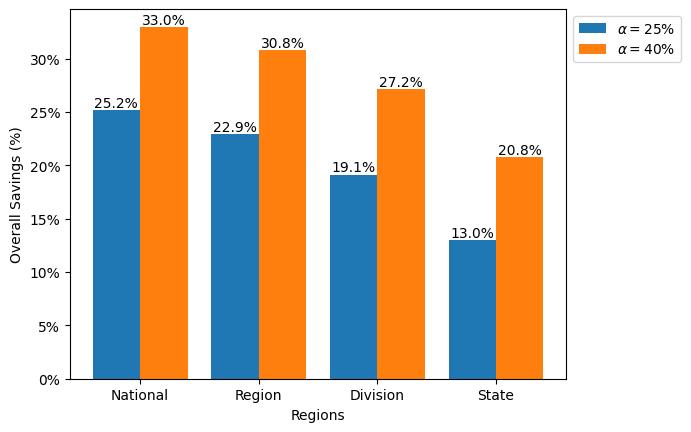}
		\caption{Overall Savings}
		\label{fig:regional_save_overall}
	\end{subfigure}
	\caption{Statistics for Regional Decomposition}
	\label{fig:regional}
\end{figure}

Figure~\ref{fig:regional} presents the summary of the regional decomposition.
As expected, a finer regional decomposition decreases the number of loads served by the ATHN and lowers the savings.
But surprisingly, most of the benefits are still obtained when ATHN operations are planned on the region or division level.
Planning at the state level is significantly more expensive, although ATHN still outperforms the current system by 13\% to 21\%.
This indicates that a successful system is likely to require collaboration between states.
Figure~\ref{fig:allocation} presents the allocation of autonomous trucks to regions or divisions.
Recall that this is optimized by the model as a byproduct of the regional decomposition (Section~\ref{sec:reg_temp}).
Both plots indicate that a majority of the trucks are allocated to the South and in the Northeast, in line with the data presented in Figure~\ref{fig:athn_design_100}.
Figure~\ref{fig:REGIONAL_routes} presents an example route for the Northeast region for $\alpha = 25\%$.
Note that vehicles are allowed to leave the region, but have to return empty before they can pick up the next load.
This corresponds to a situation where regions have joint infrastructure but do not communicate about orders.

\begin{figure}[!tp]
	\centering
	\begin{subfigure}{0.47\textwidth}
		\centering
		\includegraphics[width=\textwidth]{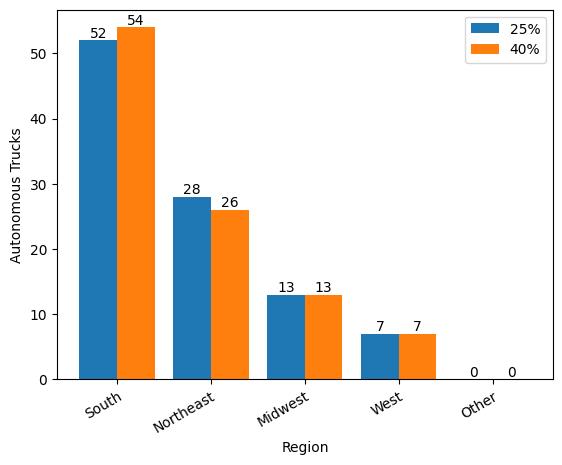}
	\end{subfigure}
	\begin{subfigure}{0.47\textwidth}
		\centering
		\includegraphics[width=\textwidth]{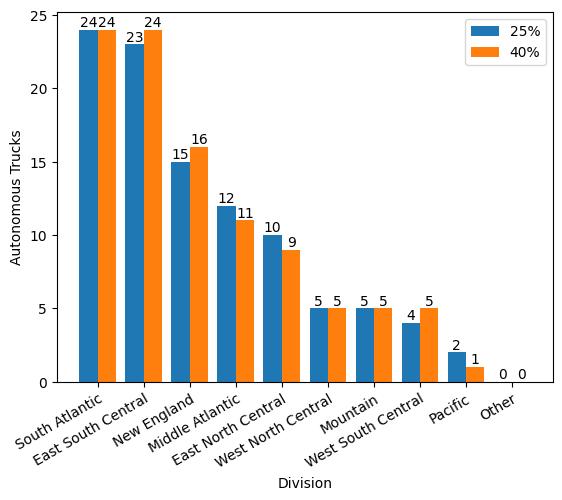}
	\end{subfigure}
	\caption{Vehicle Allocation per Area}
	\label{fig:allocation}
\end{figure}

\begin{figure}[!tp]
	\centering
	\includegraphics[width=0.6\textwidth]{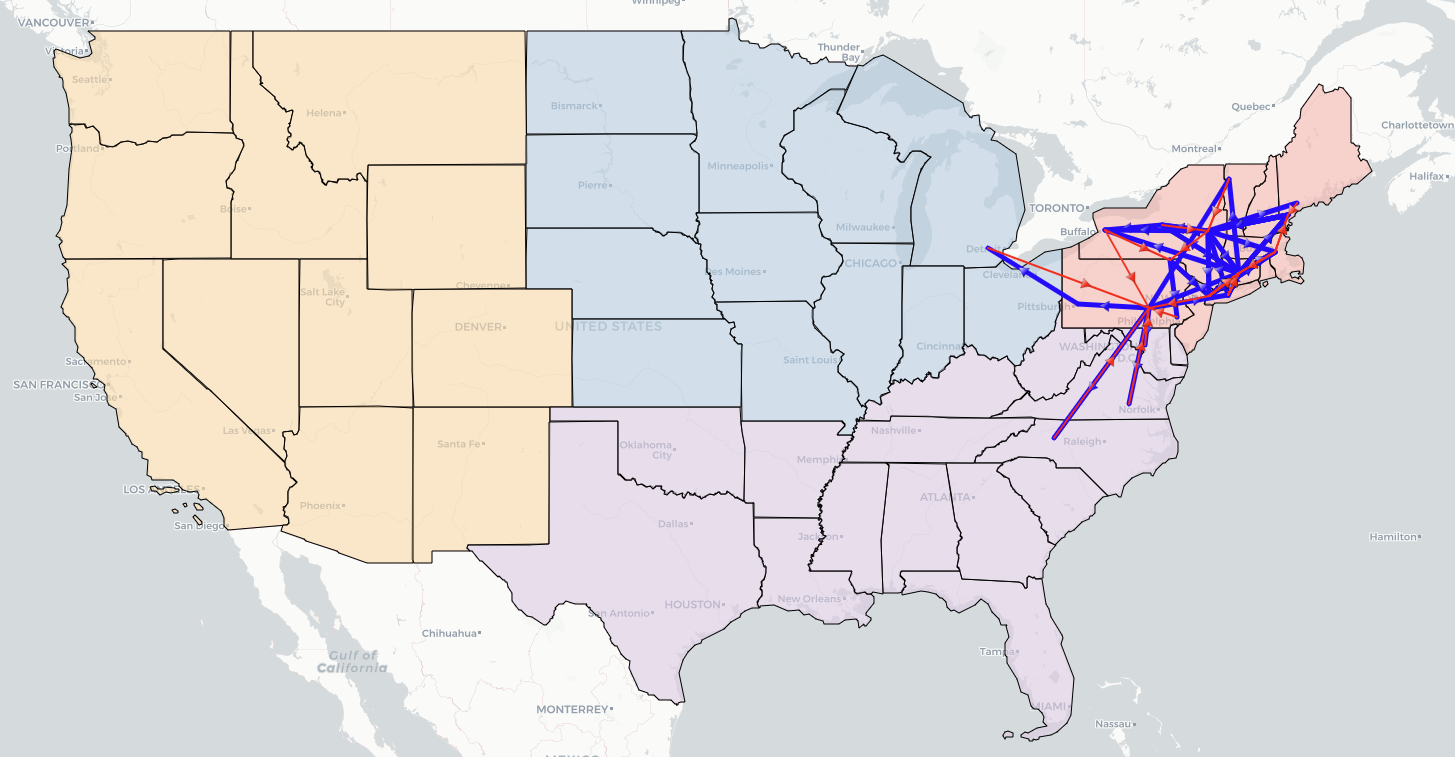}
	\caption{Example Route Northeast Region for $\alpha = 25\%$}
	\label{fig:REGIONAL_routes}
\end{figure}

\paragraph{Temporal Decomposition}
Figure~\ref{fig:temporal} summarizes the results for decomposing the optimization problem into smaller time periods that are solved on a rolling horizon as described in Section~\ref{sec:reg_temp}.
Surprisingly, Figure~\ref{fig:temporal_cost} shows that the ATHN can be operated on a weekly basis at only a minimal increase in cost.
This indicates that a week is sufficiently long to deal with the vehicle imbalances created during the previous period, without the need for explicit rebalancing.
Figure~\ref{fig:temporal_load} shows that essentially the same number of loads are served whether the operations are planned on a monthly, biweekly, or weekly basis.
This has the potential to greatly simplify planning, as loads do not have to be announced too far in advance.
When the planning period is reduced to a single day, it can be seen that the costs do increase.
More noticeably, the number of served loads drops significantly.
This suggests that the optimization model is still able to perform the most profitable tasks, but is unable to exploit all the opportunities because the autonomous trucks are not in the right place at the beginning of the day.

\begin{figure}[!tp]
	\centering
	\begin{subfigure}{0.47\textwidth}
		\centering
		\includegraphics[width=\textwidth]{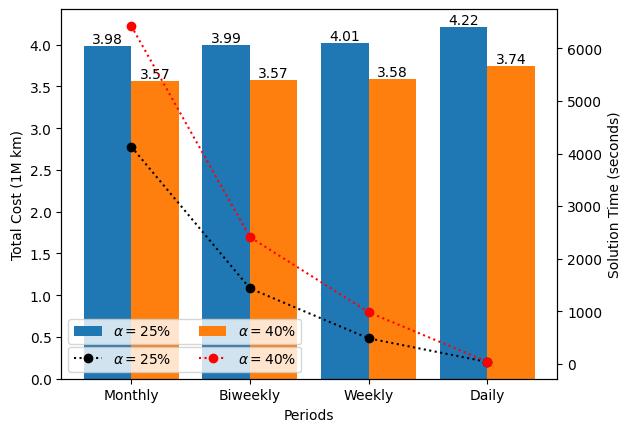}
		\caption{Total Cost and Computational Time}
		\label{fig:temporal_cost}
	\end{subfigure}
	\begin{subfigure}{0.47\textwidth}
		\centering
		\includegraphics[width=\textwidth]{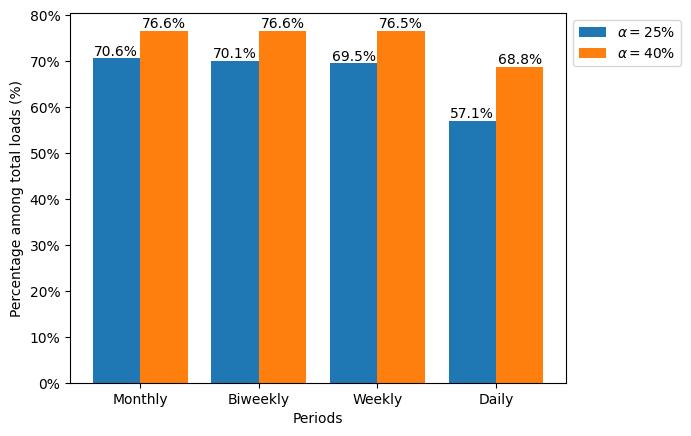}
		\caption{Loads}
		\label{fig:temporal_load}
	\end{subfigure}
	\caption{Statistics for Temporal Decomposition}
	\label{fig:temporal}
\end{figure}

In terms of solution time, there is also a significant advantage to planning on a weekly basis.
Figure~\ref{fig:temporal_cost} includes two line plots that indicate the \emph{total} solving time over the full horizon.
For the $\alpha=25\%$ case, for example, it shows that planning biweekly instead of monthly only requires 35\% of the solving time, while planning weekly instead of monthly only requires 12\% of the solving time.
It is concluded for the case study that planning weekly is a practical and fast alternative to planning monthly.
Although the quality is worse, planning daily is extremely fast and may be useful to adjust schedules when unforeseen events happen.


\subsection{Loading and Unloading Time}

The baseline assumes that loading or unloading an autonomous truck takes 30 minutes.
However, there remains uncertainty in the time this will take in practice.
For example, operators may choose to perform more extensive inspections that increase the loading and unloading time.
Figure~\ref{fig:loadunload} presents results for the $K=100$ case when the loading/unloading time $S$ is varied from zero up to two hours.
All instance were solved to optimality, except for the case $\alpha = 40\%$ and $S = 2$ hours which remained at a small gap of less than 0.01\%.
It can be seen that increasing the loading/unloading time reduces the amount of loads that are served autonomously.
At the same time, the overall savings go down, but not by as much.
This indicates that when loading/unloading time increases, the optimization model becomes more selective about which loads to serve autonomously and which loads to serve with direct trips.
This allows for accommodating a significant increase in loading/unloading time without substantial cost increases.

\begin{figure}[!tp]
	\centering
	\begin{subfigure}{0.47\textwidth}
		\centering
		\includegraphics[width=\textwidth]{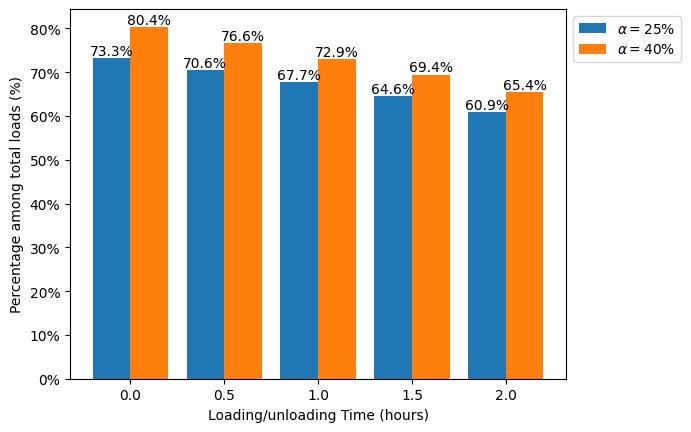}
		\caption{Loads}
		\label{fig:loadunload_load}
	\end{subfigure}
	\begin{subfigure}{0.47\textwidth}
		\centering
		\includegraphics[width=\textwidth]{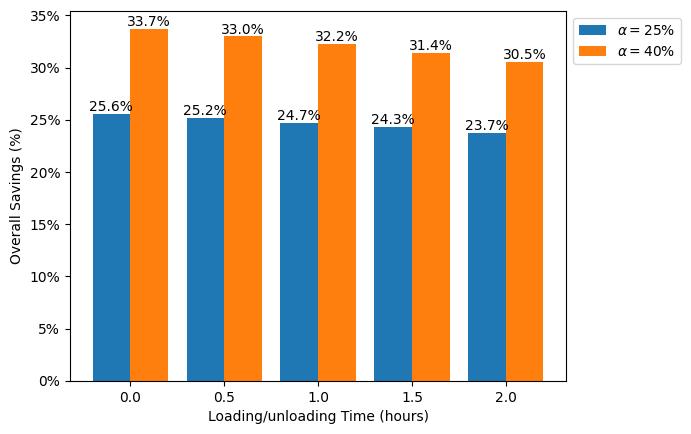}
		\caption{Overall Savings}
		\label{fig:loadunload_save_overall}
	\end{subfigure}
	\caption{Statistics for Varying Loading/Unloading Time}
	\label{fig:loadunload}
\end{figure}

\subsection{Network Size}

Finally, it is explored how the number of transfer hubs affects the ATHN.
Figure~\ref{fig:hubs} summarizes the results.
Figure~\ref{fig:hubs_load} shows that as the number of hubs increases, more loads will be served autonomously.
This effect is due to the shorter first and last mile distances that make it more attractive to use the ATHN.
Figure~\ref{fig:hubs_save_overall} shows that the ATHN remains at least 19.7\% cheaper than the current system, even when the number of hubs is reduced to 50.
This is an important fact for practical implementation: it is not necessary to deploy the full network at once to obtain substantial savings.
For the case study, additional benefits can be obtained by further extending to 200 hubs.
In this case the benefits mainly derive from making the first and last miles shorter, rather than capturing additional loads.

\begin{figure}[!tp]
	\centering
	\begin{subfigure}{0.47\textwidth}
		\centering
		\includegraphics[width=\textwidth]{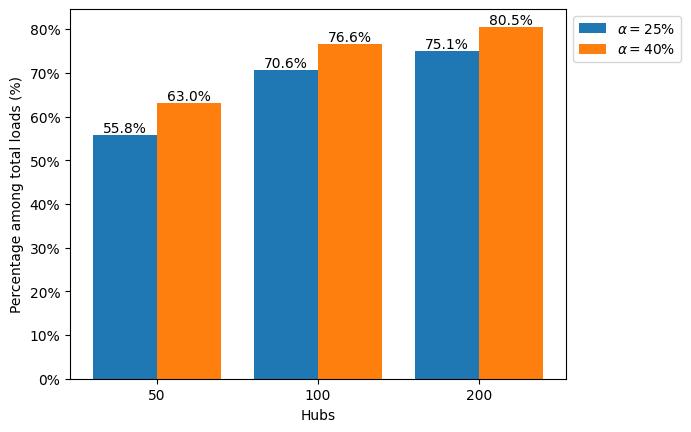}
		\caption{Loads}
		\label{fig:hubs_load}
	\end{subfigure}
	\begin{subfigure}{0.47\textwidth}
		\centering
		\includegraphics[width=\textwidth]{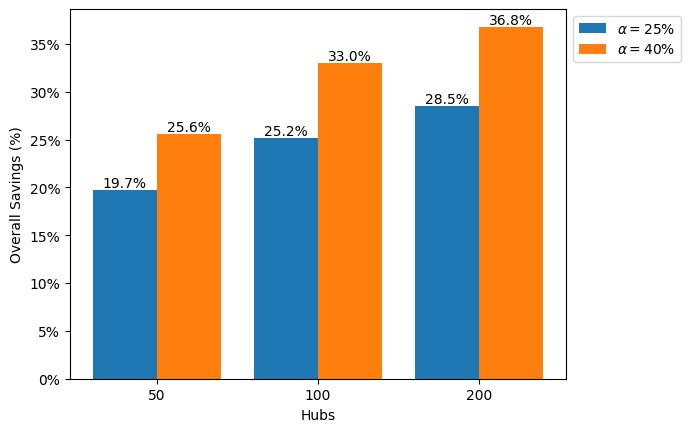}
		\caption{Overall Savings}
		\label{fig:hubs_save_overall}
	\end{subfigure}
	\caption{Statistics for Varying Number of Hubs}
	\label{fig:hubs}
\end{figure}

\section{Conclusion}
\label{sec:conclusion}

Autonomous trucks are expected to fundamentally transform the freight transportation industry.
In particular, Autonomous Transfer Hub Networks (ATHNs), which combine autonomous trucks on middle miles with human-driven trucks on the first and last miles, are seen as the most likely deployment pathway for this technology. 
This paper presented a framework to optimize ATHN operations based on a flow-based optimization model.
The optimization model exploits the problem structure and can be solved by blackbox solvers in a matter of hours.
Tools were also provided to perform regional and temporal decomposition in the same framework.

A realistic case study was performed to demonstrate the capability of the optimization model to handle large-scale systems.
The case study is based on realistic data provided by Ryder System, Inc., that spans all of the United States over a four-week horizon.
Computational results were presented for a system of 100 hubs, 6842 loads, and up to 250 vehicles.
The results show that a blackbox solver can solve these instances to within 0.18\% of optimality within three hours of computation time.

The case study itself provided many insights into the impact of self-driving trucks on freight transportation.
The baseline results indicate cost savings in the range of 20\% to 37\% for the most challenging orders that are currently served by direct trips.
This corresponds to upward of \$16.9M per year on the case study.
A detailed sensitivity analysis was provided to study various changes to the system.
It was shown that increasing the pickup-time flexibility leads to additional potential savings that cannot yet be exploited by the current algorithm, which motivates future work.
Using a MIP start significantly improved the performance for these difficult instances.
The analysis of the number of trucks demonstrates the trade-off between cost reduction and truck inactivity.
It also shows the recommended path of implementation for the case study, starting with trucks that exclusively serve the busy East and West, and moving to routes that cover the whole country as the number of trucks increases.

Decomposing the optimization problems into regions and into smaller time periods led to some interesting practical insights.
For the case study, it was shown that planning on the regional level maintains many of the benefits of planning on the national level.
Similarly, it was found that planning one week ahead on a rolling horizon leads to surprisingly good results and also significantly reduced solving time by 88\%.
Daily planning is extremely fast and may be useful to adjust schedules when unforeseen events happen.
The analysis of the loading/unloading time showed that long loading/unloading times of up to two hours (e.g., to perform inspections) can be accommodated with minimal impact on the system's cost-effectiveness.
And finally, varying the size of the network demonstrated that the ATHN can still be run efficiently when the number of hubs is reduced from 100 to 50.

In conclusion, it is found that the optimization framework for ATHN is an effective tool to study large full truckload systems and gain insights from real data.
Interesting directions for future work include studying the effect of ATHN on human labor beyond the drivers, e.g., support staff at the transfer hubs.
A first step in that direction has been taken by \citep{LeeEtAl2023-ConstraintProgrammingImprove}, which presents a constraint programming method to optimize the size of each hub.
Another future direction could be to incorporate the possibility of platooning into the model, to save additional costs.
Finally, it would be interesting to extend the framework to less-than-truckload transportation.

\subsubsection*{Acknowledgments}
This research was partly funded through a gift from Ryder and partly supported by the NSF AI Institute for Advances in Optimization (Award 2112533). Special thanks to the Ryder team for their invaluable support, expertise, and insights.

\bibliographystyle{trc}
\bibliography{references}

\end{document}